\let\cite\citep
\providecommand{\algorithmicfunction}{}
\providecommand{\algorithmicendfunction}{}
\providecommand{\algorithmicinput}{}
\providecommand{\algorithmicoutput}{}
\providecommand{\algorithmicreturn}{}
  \renewcommand{\algorithmicfunction}{\textbf{procedure}}%
  \renewcommand{\algorithmicendfunction}{\algorithmicend\ \textbf{procedure}}%
  \renewcommand{\algorithmicreturn}{\textbf{return}}%
  \newcommand{\STATE}{\State}%
  \newcommand{\FOR}{\For}%
  \newcommand{\ENDFOR}{\EndFor}%
  \newcommand{\IF}{\If}%
  \newcommand{\ENDIF}{\EndIf}%
  \newcommand{\FUNCTION}[1]{\State \algorithmicfunction\ #1}%
  \newcommand{\ENDFUNCTION}{\State \algorithmicendfunction}%
  \newcommand{\RETURN}{\State \algorithmicreturn\ }%
  \renewcommand{\algorithmicinput}{\textbf{Input:}}%
  \newcommand{\Input}{\State \algorithmicinput\ }%
  \renewcommand{\algorithmicoutput}{\textbf{Output:}}%
  \newcommand{\Output}{\State \algorithmicoutput\ }%
\title{Adaptive Conditional Gradient Sliding:\\ Projection-Free and Line-Search-Free Acceleration}
\author{Shota Takahashi\thanks{
Graduate School of Information Science and Technology, The University of Tokyo, Tokyo, Japan (\href{mailto:shota@mist.i.u-tokyo.ac.jp}{\texttt{shota@mist.i.u-tokyo.ac.jp}})}}
\date{}
\begin{document}

\maketitle

\begin{abstract}
  We study convex optimization problems over a compact convex set where projections are expensive but a linear minimization oracle (LMO) is available.
  We propose the \emph{adaptive conditional gradient sliding method} (\adcgs), a projection-free and line-search-free method that retains Nesterov's acceleration with adaptive stepsizes based on local Lipschitz estimates.
  \adcgs combines an accelerated outer scheme with an LMO-based inner routine.
  It reuses gradients across multiple LMO calls to reduce gradient evaluations, while controlling the subproblem inexactness via a prescribed accuracy level coupled with adaptive stepsizes.
  We prove accelerated rates for convex objective functions, matching projection-based methods, without relying on a projection oracle.
  For locally strongly convex objective functions, we further establish linear convergence without additional geometric assumptions on the constraint set, such as polytopes or strongly convex sets.
  Experiments on constrained $\ell_p$ regression, logistic regression, and least-squares problems demonstrate that \adcgs improves over projection-free baselines and provides competitive performance when projections are inexpensive.
\end{abstract}

\section{Introduction}
In this paper, we consider constrained convex optimization problems of the form
\begin{align}
  \min_{x \in P} \quad f(x), \label{prob}
\end{align}
where $f:\R^n \to \R$ is a continuously differentiable convex function and $P \subset \R^n$ is a compact convex set. Let $x^*\in P$ denote an optimal solution.
We assume access to a first-order oracle (FOO) for $f$, \ie, given $x \in \R^n$, we can compute $\nabla f(x)$, and a linear minimization oracle (LMO) for $P$, \ie, given $c \in \R^n$, we can compute $v \in \argmin_{u \in P} \langle c, u\rangle$.
In large-scale constrained optimization, projection-free methods are particularly useful when projections are computationally expensive or intractable, while LMO calls remain cheap for many structured sets~\citep{Combettes2021-ct,Braun2025-sz}.
The \emph{conditional gradient method} (CG), also known as the \emph{Frank--Wolfe method} (FW), is widely used for such projection-free settings.
However, achieving fast convergence in practice with simple, parameter-free updates remains challenging.

\paragraph{Related work:}
The CG method was originally proposed by~\citet{Frank1956-cq} and independently rediscovered and further developed by~\citet{Levitin1966-sm}.
Early complexity lower bounds for the CG method were established by~\citet{Canon1968-sr} and refined by~\citet{GueLat1986-bg}.
More recently, \citet{Jaggi2013-oz} established a refined convergence analysis of CG methods, including a lower bound that highlights a trade-off between sparsity and error.
Concurrently,~\citet{Lan2013-dv} examined the complexity of linear-optimization-based methods and established a comparable lower bound.
Later,~\citet{Lacoste-Julien2015-gb} provided a detailed convergence analysis for CG variants.

On the algorithmic side, \citet{Lan2016-op} proposed the \emph{conditional gradient sliding method} (CGS), a projection-free framework that couples an outer accelerated scheme with inner CG loops.
A key feature of CGS is that it reduces the number of FOO calls by reusing gradients across multiple LMO calls and thereby skipping some gradient evaluations.
A recent improvement of CGS was proposed by~\citet{Garber2025-al}.
A line search strategy for CG was introduced by~\citet{Pedregosa2020-ay}, refined for numerical stability by~\citet{Pokutta2024-qu}, and extended beyond Lipschitz gradients by~\citet{Takahashi2026-py}.
Restart strategies and error-bound analyses have been developed to improve convergence~\citep{Kerdreux2019-ec,Kerdreux2021-fv,Kerdreux2022-tv}.
See~\citep{Pokutta2024-qu,Braun2025-sz} for background on CG methods.

In first-order methods, adaptive stepsizes based on local curvature or Lipschitz estimates have been studied to reduce parameter tuning.
More recently, line-search-free methods, \eg,~\citep{Malitsky2019-dp,Malitsky2020-vs,Malitsky2024-ct,Latafat2025-iu}, have emerged as appealing alternatives in practice, using adaptive stepsizes based on local Lipschitz estimates and avoiding parameter tuning.
Additionally, CG methods using adaptive stepsizes have been proposed by~\citet{Yagishita2025-sx,Khademi2025-gt}.
The \emph{auto-conditioned fast gradient method} (AC-FGM)~\citep{Li2025-cb} is the first method that is line-search-free, combines acceleration with adaptive stepsizes, and provides theoretical guarantees when projections are available.
Related accelerated schemes for unconstrained problems have also been studied~\citep{Suh2025-eq,Borodich2025-jq}.
Both CGS and AC-FGM are rooted in Nesterov's accelerated gradient framework~\citep{Nesterov1983-xn,Nesterov2018-gj} and are based on~\citep{Auslender2006-cy,Tseng2008-fz,Gasnikov2018-by,Ahn2022-zt}.

\paragraph{Limitations of Existing Methods:}
Despite significant progress, CGS depends on global smoothness parameters, which are often unknown or difficult to estimate in practice.
While AC-FGM is fast and line-search-free, it relies on projections and may be impractical when projections are expensive.
Thus, it remains unclear whether accelerated rates can be achieved using adaptive stepsizes without access to a projection oracle.
These gaps motivate an accelerated, projection-free, and line-search-free method that explicitly handles the interaction between adaptive stepsizes and inexact inner solves.
This raises the following question:
\begin{quote}
  \emph{Can we achieve accelerated rates without projection or line-search?}
\end{quote}

\paragraph{Contributions:}
We answer the above question in the affirmative and propose the \emph{adaptive conditional gradient sliding method} (\adcgs), an \textbf{accelerated}, \textbf{projection-free}, and \textbf{line-search-free} method.
Our main contributions are:
\begin{itemize}
  \item \textbf{Method:} We develop \adcgs, whose inner CG loops use only LMO calls and stop at a prescribed FW-gap tolerance. By employing adaptive stepsizes based on local Lipschitz estimates, \adcgs avoids line search in its main iterations and does not require a global smoothness constant, unlike CGS.
  \item \textbf{Key technical mechanism:} Local Lipschitz estimates allow \adcgs to take larger stepsizes, thereby making progress more sensitive to inner inexactness and typically requiring tighter FW-gap tolerances, hence more LMO calls. Our analysis quantifies this trade-off and provides a termination schedule that preserves acceleration.
  In particular, for the fixed-iteration scheme, the FW-gap tolerance decays at the same asymptotic rate as in CGS, even with adaptive stepsizes.
  \item \textbf{Theory and convergence rates:} We establish accelerated rates $\O(1/k^2)$ for convex objective functions, which match existing projection-based guarantees, \eg,~\citep{Li2025-cb}, while using only an LMO and no projection oracle.
  We also prove linear convergence for locally strongly convex objective functions, without requiring additional structure on the constraint set, such as polytopes or strongly convex sets.
  \item \textbf{Experiments:} Experiments on constrained $\ell_p$ regression, logistic regression with real-world datasets, and least-squares problems show that \adcgs improves over projection-free baselines and is competitive with projection-based methods.
\end{itemize}

\paragraph{Notation:} 
We use $\|\cdot\|$ and $\langle\cdot,\cdot\rangle$ for the Euclidean norm and inner product, and $\|\cdot\|_p$ for the $\ell_p$ norm ($p\geq1$).
For symmetric $A\in\R^{n\times n}$, $\lambda_{\max}(A)$ is its largest eigenvalue.

Let $f:\R^n\to\R$ be differentiable and convex.
For $x,y\in\R^n$, define $D_f(x,y):=f(x)-f(y)-\langle \nabla f(y),x-y\rangle$.
We say $f$ is $L$-smooth if there exists $L > 0$ such that $\|\nabla f(x)-\nabla f(y)\|\leq L\|x-y\|$ for all $x,y\in\R^n$, which implies $\frac{1}{2L}\|\nabla f(x)-\nabla f(y)\|^2 \leq D_f(x,y) \leq \frac{L}{2}\|x-y\|^2$ due to the convexity of $f$.
We say $f$ is $\mu$-strongly convex if there exists $\mu > 0$ such that $f-\frac{\mu}{2}\|\cdot\|^2$ is convex.
We say $f$ is locally smooth (resp. locally strongly convex) if for every compact set $C\subset\R^n$ there exists $L>0$ (resp. $\mu>0$) such that $f$ is $L$-smooth (resp. $\mu$-strongly convex) on $C$.
We adopt the convention that $\frac{0}{0}=0$ and $\frac{a}{0}=+\infty$ for all $a>0$.

\section{Adaptive Conditional Gradient Sliding for Convex Objective Functions}
In this section, we propose \adcgs for solving~\eqref{prob}.

\paragraph{Projection-free accelerated framework:}
Our proposed method is based on CGS~\citep{Lan2016-op}, a projection-free accelerated framework that couples an accelerated outer scheme with inner CG loops.
A key feature of CGS is that it skips some gradient evaluations, allowing multiple LMO calls to reuse a single gradient and thereby reducing the number of FOO calls.
We enhance this CGS framework by incorporating adaptive stepsizes driven by local Lipschitz estimates $L_k$, as in AC-FGM~\citep{Li2025-cb}.
Concretely, at each outer iteration, \adcgs approximately solves a strongly convex subproblem via inner CG loops:
\begin{equation}
  \label{eq:acfg-subprob}
  z_k\in\argmin_{z\in P}\biggl\{\langle \nabla f(x_{k-1}), z\rangle + \frac{1}{2\eta_k}\|z-y_{k-1}\|^2\biggr\}.
\end{equation}
We run CG on~\eqref{eq:acfg-subprob} until its FW gap is at most $\delta_k>0$, \ie, $\max_{v\in P}\langle \nabla f(x_{k-1}) + (z_k - y_{k-1})/\eta_k, z_k - v\rangle \leq \delta_k$, and set $z_k$ to the final iterate.
Thus, \adcgs remains projection-free.
We then update $y_k = (1-\beta_k)y_{k-1} + \beta_k z_k \in P$ and $x_k = \frac{\tau_k}{1+\tau_k} x_{k-1} + \frac{1}{1+\tau_k} z_k \in P$ with $\beta_k \in [0,1]$ and $\tau_k\geq0$.

\paragraph{Line-search-free framework:}
Following AC-FGM~\citep{Li2025-cb}, we compute $x_1 \neq x_0$ using the initial stepsize $\eta_1$, and choose $\eta_k$ via local Lipschitz estimates:
\begin{align}
  L_1 &= \frac{\|\nabla f(x_1) - \nabla f(x_0)\|}{\|x_1 - x_0\|}, \label{def:adaptive-step-size-1}\\
  L_k &=
  \left\{
  \begin{array}{ll}
    0, & \text{if } D_f(x_{k-1}, x_k) = 0,\\
    \frac{\|\nabla f(x_k) - \nabla f(x_{k-1})\|^2}{2D_f(x_{k-1}, x_k)}, & \text{if } D_f(x_{k-1}, x_k) > 0,
  \end{array}
  \right. \quad \text{for }k\geq 2.\label{def:adaptive-step-size}
\end{align}
We present our method in~\cref{alg:adcgs}, which allows arbitrary nonnegative sequences $\{\beta_k\}$, $\{\delta_k\}$, and $\{\tau_k\}$ with $\{\eta_k\}$ based on $L_k$, defined in~\cref{corollary:convex-sublinear,corollary:convex-fixed-iteration,corollary:convex-sublinear-2}.

\begin{algorithm}[!t]
  \caption{Adaptive Conditional Gradient Sliding method (\adcgs)}\label{alg:adcgs}
  \begin{algorithmic}[1]
    \Input $x_0 = y_0 = z_0 \in P$; $\{\beta_k\}$, $\{\delta_k\}$, $\{\eta_k\}$, $\{\tau_k\}$.
    \FOR{$k = 1,\ldots,$}
      \STATE $z_k = \cg(\nabla f(x_{k-1}), y_{k-1}, \eta_k, \delta_k)$
      \STATE $y_k = (1-\beta_k)y_{k-1} + \beta_k z_k$
      \STATE $x_k = \frac{\tau_k}{1 + \tau_k}x_{k-1} + \frac{1}{1 + \tau_k}z_k$
      \STATE Compute $L_k$ using~\eqref{def:adaptive-step-size-1} for $k=1$ and~\eqref{def:adaptive-step-size} for $k\geq 2$.
      \STATE Set $\eta_{k+1}$ according to the stepsize rule based on $L_k$.\label{line:stepsize-update}
    \ENDFOR
    \Output $x_k$

    \FUNCTION{$\cg(g, u, \eta, \delta)$}\label{alg:cg}
      \STATE $u_1 = u$
      \FOR{$t = 1,\ldots,$}
        \STATE $v_t \in \argmin_{v \in P}\langle g + (u_t - u)/\eta, v\rangle$
        \IF{$\max_{v\in P}\langle g + (u_t - u)/\eta, u_t - v\rangle \leq \delta$}
          \RETURN $u_t$
        \ENDIF
        \STATE $\gamma_t = \min\left\{1, \frac{\langle g + (u_t - u)/\eta,  u_t - v_t\rangle}{\|v_t - u_t\|^2/\eta}\right\}$,
        $u_{t+1} = (1 - \gamma_t)u_t + \gamma_t v_t$
      \ENDFOR
    \ENDFUNCTION
  \end{algorithmic}
\end{algorithm}

\paragraph{Complexity of inner CG iterations:}
We analyze the complexity of inner CG loops for~\eqref{eq:acfg-subprob} to an accuracy level of $\delta_k > 0$.
The proof is given in Appendix~\ref{app:proof-cg-complexity}. 
\begin{theorem}\label{theorem:cg-complexity}
  Let $D := \max_{x,y\in P}\|x-y\|$ be the diameter of $P$.
  The number of CG iterations to solve~\eqref{eq:acfg-subprob} up to accuracy $\delta_k$ in the $k$-th outer iteration is at most $T_k := \left\lceil\frac{6D^2}{\eta_k\delta_k}\right\rceil$.
\end{theorem}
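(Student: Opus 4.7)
The plan is to view the inner CG loop as classical Frank--Wolfe applied to the convex quadratic
$h_k(z) := \langle \nabla f(x_{k-1}), z\rangle + \frac{1}{2\eta_k}\|z - y_{k-1}\|^2$,
whose gradient is $(1/\eta_k)$-Lipschitz on $\R^n$. Writing $L := 1/\eta_k$ and $g_t := \langle \nabla h_k(u_t), u_t - v_t\rangle$ for the FW gap at $u_t$, one notes that the termination test inside \cg\ is exactly $g_t \leq \delta_k$, and that $\gamma_t = \min\{1,\, g_t/(L\|v_t - u_t\|^2)\}$ is the standard short-step rule. The goal is to show that within $T_k = \lceil 6D^2/(\eta_k\delta_k)\rceil$ iterations at least one $g_t$ falls below $\delta_k$, forcing the loop to exit.

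The first step I would take is a descent lemma. Plugging the smoothness inequality
$h_k(u_{t+1}) \leq h_k(u_t) - \gamma_t g_t + \tfrac{L\gamma_t^2}{2}\|v_t - u_t\|^2$
into the two cases of $\gamma_t$ and using $\|v_t - u_t\| \leq D$ gives
$h_k(u_t) - h_k(u_{t+1}) \geq \min\!\left\{\tfrac{g_t}{2},\,\tfrac{\eta_k g_t^2}{2D^2}\right\}$ for every $t$.
Combining this with weak duality $g_t \geq h_k(u_t) - h_k^* =: \Delta_t$ yields the classical two-phase Frank--Wolfe rate $\Delta_t \leq \mathcal{O}(D^2/(\eta_k t))$: an initial geometric decrease while $\gamma_t = 1$, followed by the $1/t$ regime once $\gamma_t < 1$. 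The base case is clean because $u_1 = y_{k-1}\in P$, so $\Delta_1 \leq LD^2/2$ by smoothness and $\|u_1 - u_k^*\| \leq D$.

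To translate the primal bound into a FW-gap bound, I would then apply a subsequence argument in the spirit of~\citet{Jaggi2013-oz}: feeding $\Delta_t \leq \mathcal{O}(D^2/(\eta_k t))$ back into the descent inequality and summing the quadratic descents $\eta_k g_t^2/(2D^2)$ over a block of iterations produces $\min_{t\leq T} g_t \leq CLD^2/T$ for a small constant $C$. Taking $T \geq \lceil 6D^2/(\eta_k\delta_k)\rceil$ then guarantees $\min_{t\leq T} g_t \leq \delta_k$, so the termination check triggers no later than $T_k$.

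The main obstacle I anticipate is pinning the constant to exactly $6$, since the generic reference argument typically yields $27/4$ for the FW-gap bound. The useful structural fact is that $h_k$ is an \emph{exact} quadratic, so the short-step rule coincides with exact line search on $h_k$ itself, not merely on a smooth majorant; this closes the slack in the descent inequality and, combined with the fact that starting from $u_1 \in P$ gives the tighter base case $\Delta_1 \leq LD^2/2$, should be enough to match the stated constant after careful bookkeeping.
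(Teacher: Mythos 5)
Your overall strategy is the same as the paper's: treat the inner loop as Frank--Wolfe on the $(1/\eta_k)$-smooth quadratic $\psi_k$, establish the primal rate $\psi_k(u_t)-\psi_k^*=\O\!\bigl(D^2/(\eta_k t)\bigr)$, and then convert it into a bound on the best FW gap seen so far. Where you differ is in the conversion step: the paper dominates the exact-line-search update by the open-loop step $\lambda_{t+1}=2/(t+1)$ and then takes a \emph{weighted average} of the per-iteration gap inequalities with weights $\lambda_{i+1}/\Lambda_{i+1}=i$, which telescopes and yields $\min_{i\le t}\G_k(u_i)\le 6D^2/(\eta_k(t+1))$ with the constant $6$ falling out directly ($2t+t$ in the numerator against $t(t+1)/2$ in the denominator), whereas you propose the short-step descent lemma plus a Jaggi-style second-half subsequence argument. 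Your route is workable and gives a constant of the same order, so the difference is cosmetic; the paper's averaging is simply the cleaner way to pin the constant.

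There is, however, one step that fails as written: the base case ``$\Delta_1\le LD^2/2$ by smoothness and $\|u_1-u_k^*\|\le D$'' is false in general. For the exact quadratic, $h_k(u_1)-h_k(u^*)=\langle\nabla h_k(u^*),u_1-u^*\rangle+\tfrac{L}{2}\|u_1-u^*\|^2$, and the cross term is nonnegative (by optimality of $u^*$ over $P$) but can be as large as $\|\nabla h_k(u^*)\|\,D$, which scales with $\|\nabla f(x_{k-1})\|$ and is not controlled by $LD^2$. So $\Delta_1$ can be arbitrarily larger than $LD^2/2$, and your induction cannot start at $t=1$. The standard repair, which is exactly what the paper's choice $\lambda_2=1$ implements, is to observe that the exact line search along $[u_1,v_1]$ is dominated by the full step $\gamma=1$, whence $h_k(u_2)\le h_k(v_1)\le h_k(u_1)+\langle\nabla h_k(u_1),u^*-u_1\rangle+\tfrac{L}{2}D^2\le h_k^*+\tfrac{L}{2}D^2$; the coefficient of $\Delta_1$ vanishes and the $1/t$ induction starts from $\Delta_2\le LD^2/2$. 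With that one-index shift (and the corresponding care for the $i=1$ term in the gap-averaging sum, which must also be handled via the first full step rather than via the primal rate), the rest of your argument goes through.
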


\section{Convergence Analysis for Convex Objective Functions}\label{section:convergence-analysis-convex}
In this section, we establish convergence guarantees for convex objective functions.
Building on a one-step inequality, the main new ingredient is inexactness in $z_k$: the FW-gap tolerance $\delta_k$ generates an error term that accumulates over iterations.
We show how to bound this accumulated error while still allowing adaptive stepsizes, and this yields our convergence rates.
We now state the one-step inequality under the following conditions on the parameter sequences $\{\tau_k\}$, $\{\beta_k\}$, and $\{\eta_k\}$: $\tau_1 = 0$,
\begin{align}
  \beta_1 &= 0, \quad \beta_k = \beta > 0, k \geq 2,\label{ineq:beta}\\
  \eta_2 &\leq \min\left\{(1 - \beta)\eta_1, \tfrac{1}{4L_1}\right\},\label{ineq:eta-k2}
\end{align}
and, for $k \geq 3$,
\begin{equation}
  \eta_k \leq \min\left\{2(1-\beta)^2\eta_{k-1}, \tfrac{\tau_{k-1}}{4L_{k-1}}, \tfrac{\tau_{k-2} + 1}{\tau_{k-1}}\eta_{k-1}\right\}, \label{ineq:eta-k3}
\end{equation}
where $L_k$ is the local Lipschitz estimate in~\eqref{def:adaptive-step-size-1} and~\eqref{def:adaptive-step-size}.

\begin{proposition}\label[proposition]{proposition:convex-obj-ineq}
  Suppose that $L_k$ is defined in~\eqref{def:adaptive-step-size-1} and~\eqref{def:adaptive-step-size}, and that $\{\tau_k\}$, $\{\beta_k\}$, and $\{\eta_k\}$ satisfy $\tau_1 = 0$,~\eqref{ineq:beta},~\eqref{ineq:eta-k2}, and~\eqref{ineq:eta-k3}.
  Let $\{x_k\}$, $\{y_k\}$, and $\{z_k\}$ be the sequences generated by \adcgs.
  Then, for any $z \in P$, it holds that
  \begin{align}
    &\sum_{i=1}^k\eta_{i+1}(\tau_i f(x_i) + \langle\nabla f(x_i), x_i - z\rangle - \tau_i f(x_{i-1}))+ \frac{\eta_2}{2\eta_1}(\|z_1 - y_1\|^2 + \|z_2 - z_1\|^2) - \Scal_k\nonumber\\
    &\leq\frac{1}{2\beta}\|y_1 - z\|^2 - \frac{1}{2\beta}\|y_{k+1} - z\|^2 + \sum_{i=2}^{k+1}\D_i,\label{ineq:obj-bound}
  \end{align}
  where $\D_i := \eta_i \langle\nabla f(x_{i-1}) - \nabla f(x_{i-2}), z_{i-1} - z_i \rangle -\tfrac{\eta_i\tau_{i-1}}{2L_{i-1}}\|\nabla f(x_{i-1}) - \nabla f(x_{i-2})\|^2-\tfrac{1}{2}\|z_i - y_{i-1}\|^2$ for any $i \geq 2$ and $\Scal_k:=\sum_{i=1}^k\eta_{i+1}(\delta_{i+1} + \delta_i)$.
\end{proposition}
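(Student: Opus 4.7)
The plan is to apply the inexact FW-gap condition at each outer iteration in three-point form,
\begin{equation*}
\eta_i\langle\nabla f(x_{i-1}), z_i - v\rangle + \langle z_i - y_{i-1}, z_i - v\rangle \leq \eta_i\delta_i \quad \text{for every } v \in P,
\end{equation*}
at two complementary test points: $v = z$ at the shifted index $i+1$, which generates the telescoping $\tfrac{1}{2\beta}\|y_i - z\|^2$ structure, and $v = z_{i+1}$ at index $i$ rescaled by $\eta_{i+1}/\eta_i$, which handles the coupling between consecutive outer iterates. First I would convert the proposition's LHS using convexity $f(x_i) - f(x_{i-1}) = -D_f(x_{i-1},x_i) + \langle\nabla f(x_i), x_i - x_{i-1}\rangle$ and the update $z_i = (1+\tau_i)x_i - \tau_i x_{i-1}$ to obtain
\begin{equation*}
\tau_i f(x_i) + \langle\nabla f(x_i), x_i - z\rangle - \tau_i f(x_{i-1}) = \langle\nabla f(x_i), z_i - z\rangle - \tau_i D_f(x_{i-1}, x_i),
\end{equation*}
noting that $\tau_i D_f(x_{i-1}, x_i) = \tfrac{\tau_i}{2L_i}\|\nabla f(x_i)-\nabla f(x_{i-1})\|^2$ by~\eqref{def:adaptive-step-size} matches the middle term of $\D_{i+1}$, and that splitting $\langle\nabla f(x_i), z_i - z\rangle = \langle\nabla f(x_i), z_{i+1}-z\rangle + \langle\nabla f(x_{i-1}), z_i - z_{i+1}\rangle + \langle\nabla f(x_i) - \nabla f(x_{i-1}), z_i - z_{i+1}\rangle$ exposes the leading term of $\D_{i+1}$ as its last piece. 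After cancellation with $\sum \D_i$, only the first two split pieces require further work.

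Applying the prox inequality with $v=z$ at index $i+1$ and expanding $-\tfrac{1}{2}\|z_{i+1} - z\|^2$ via $y_{i+1} = (1-\beta)y_i + \beta z_{i+1}$ gives
\begin{equation*}
\eta_{i+1}\langle\nabla f(x_i), z_{i+1} - z\rangle \leq \tfrac{1}{2\beta}\|y_i - z\|^2 - \tfrac{1}{2\beta}\|y_{i+1} - z\|^2 - \tfrac{2-\beta}{2}\|z_{i+1} - y_i\|^2 + \eta_{i+1}\delta_{i+1},
\end{equation*}
which telescopes to $\tfrac{1}{2\beta}\|y_1 - z\|^2 - \tfrac{1}{2\beta}\|y_{k+1} - z\|^2$ plus residuals, while applying the prox inequality with $v = z_{i+1}$ at index $i$ and scaling by $\eta_{i+1}/\eta_i$ yields, in inner-product form,
\begin{equation*}
\eta_{i+1}\langle\nabla f(x_{i-1}), z_i - z_{i+1}\rangle \leq \tfrac{\eta_{i+1}}{\eta_i}\langle y_{i-1} - z_i, z_i - z_{i+1}\rangle + \eta_{i+1}\delta_i.
\end{equation*}
All $\delta$ contributions then collapse into $\Scal_k$, and the proposition reduces to showing
\begin{equation*}
-\sum_{i=1}^k\tfrac{1-\beta}{2}\|z_{i+1} - y_i\|^2 + \sum_{i=1}^k\tfrac{\eta_{i+1}}{\eta_i}\langle y_{i-1} - z_i, z_i - z_{i+1}\rangle + \tfrac{\eta_2}{2\eta_1}(\|z_1 - y_1\|^2 + \|z_2 - z_1\|^2) \leq 0.
\end{equation*}

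The hard part will be the cross-iteration inner product $\langle y_{i-1} - z_i, z_i - z_{i+1}\rangle$, which is where the adaptive-stepsize conditions~\eqref{ineq:eta-k2} and~\eqref{ineq:eta-k3} enter. The key identity, obtained from $y_i - y_{i-1} = \beta(z_i - y_{i-1})$, is
\begin{equation*}
z_i - z_{i+1} = (1-\beta)(z_i - y_{i-1}) - (z_{i+1} - y_i),
\end{equation*}
which gives $\langle y_{i-1} - z_i, z_i - z_{i+1}\rangle = -(1-\beta)\|z_i - y_{i-1}\|^2 + \langle z_i - y_{i-1}, z_{i+1} - y_i\rangle$ for $i \geq 2$. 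Applying Young's inequality to the residual cross term with the sharp constant $c = 2(1-\beta)$ exactly cancels the $\|z_i - y_{i-1}\|^2$ contribution and leaves $\tfrac{\eta_{i+1}}{4(1-\beta)\eta_i}\|z_{i+1} - y_i\|^2$; the net coefficient of $\|z_{i+1}-y_i\|^2$ becomes $-\tfrac{1-\beta}{2} + \tfrac{\eta_{i+1}}{4(1-\beta)\eta_i}$, which is nonpositive precisely under~\eqref{ineq:eta-k3}. The boundary index $i=1$ is handled separately: since $\beta_1 = 0$ forces $y_0 = y_1$, the parallelogram identity $\langle y_1 - z_1, z_1 - z_2\rangle = \tfrac{1}{2}(\|y_1 - z_2\|^2 - \|y_1 - z_1\|^2 - \|z_1 - z_2\|^2)$ combines the initial $\tfrac{\eta_2}{2\eta_1}(\|z_1 - y_1\|^2 + \|z_2 - z_1\|^2)$ term with the $i=1$ cross term into $(\tfrac{\eta_2}{2\eta_1} - \tfrac{1-\beta}{2})\|z_2 - y_1\|^2 \leq 0$ under~\eqref{ineq:eta-k2}, completing the proof.
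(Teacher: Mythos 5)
Your proof is correct and follows essentially the same route as the paper's: the inexact FW-gap condition tested at $z$ and at the next iterate (rescaled by $\eta_{i+1}/\eta_i$), the $y$-update identity to telescope $\tfrac{1}{2\beta}\|y_i-z\|^2$, the adaptive Lipschitz estimate to produce the middle term of $\D_{i+1}$, and conditions~\eqref{ineq:eta-k2} and~\eqref{ineq:eta-k3} invoked at exactly the same places. The only (cosmetic) difference is that you absorb the cross term $\langle z_i - y_{i-1},\, z_{i+1}-y_i\rangle$ via Young's inequality with constant $2(1-\beta)$, whereas the paper uses the three-point identity together with $\|a\|^2+\|b\|^2\ge\tfrac12\|a+b\|^2$; both yield the same requirement $\eta_{i+1}\le 2(1-\beta)^2\eta_i$.
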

\begin{proof}[Sketch of Proof]
  The inner loop yields the FW gap of~\eqref{eq:acfg-subprob} up to $\delta_k$, namely, it satisfies $\langle \eta_k\nabla f(x_{k-1}) + (z_k-y_{k-1}),\, z-z_k\rangle \geq -\eta_k\delta_k$ for all $z\in P$ and $k\geq 1$.
  Substituting this inequality into the accelerated scheme and telescoping as in~\citep[Proposition~1]{Li2025-cb} yields~\eqref{ineq:obj-bound}.
  A complete proof is given in Appendix~\ref{app:proof-convex-obj-ineq}.
\end{proof}

The right-hand side of~\eqref{ineq:obj-bound} contains $\sum_{i=2}^{k+1}\D_i$, which must be bounded to obtain convergence rates.
Using the same bounding technique as in~\citep[Theorem~1]{Li2025-cb}, we obtain the following theorem.
The proof is given in Appendix~\ref{app:proof-convex-obj-decrease}.

\begin{theorem}\label[theorem]{theorem:convex-obj-decrease}
  Suppose that $L_k$ is defined in~\eqref{def:adaptive-step-size-1} and~\eqref{def:adaptive-step-size}, and that $\{\tau_k\}$, $\{\beta_k\}$, and $\{\eta_k\}$ satisfy $\tau_1 = 0$,~\eqref{ineq:beta},~\eqref{ineq:eta-k2}, and~\eqref{ineq:eta-k3}.
  Then, the sequences $\{x_k\}$, $\{y_k\}$, and $\{z_k\}$ generated by \adcgs are bounded.
  Moreover, it holds that
  \begin{align*}
    f(x_k) - f(x^*) &\leq \frac{1}{(\tau_k + 1)\eta_{k+1}}(\E + \Scal_k) \quad \text{and}\quad f(\overline{x}_k) - f(x^*) \leq \frac{1}{\sum_{i=2}^{k+1}\eta_i}(\E + \Scal_k),
  \end{align*}
  where $\E := \frac{1}{2\beta}\|z_0 - x^*\|^2 + \frac{\eta_2}{2}(5L_1/2 - 1/\eta_1)\|z_1 - z_0\|^2$ and $\overline{x}_k := (\sum_{i=1}^{k-1}((\tau_i + 1)\eta_{i+1} - \tau_{i+1}\eta_{i+2})x_i + (\tau_k + 1)\eta_{k+1}x_k)/(\sum_{i=2}^{k+1}\eta_i)$, with $\Scal_k$ defined in~\cref{proposition:convex-obj-ineq}.
\end{theorem}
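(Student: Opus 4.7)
The plan is to start from the one-step inequality of~\cref{proposition:convex-obj-ineq} with $z = x^*$ and combine three ingredients: (i) bound the accumulated error $\sum_{i=2}^{k+1}\D_i$ on the right-hand side by a constant depending only on the initialization, (ii) convert $\langle\nabla f(x_i),\, x_i - x^*\rangle$ into function-value gaps via convexity, and (iii) telescope the resulting weighted sum using the stepsize condition $\eta_{k+1}\tau_k \leq (\tau_{k-1}+1)\eta_k$ from~\eqref{ineq:eta-k3}.

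For the error bound, I would handle $i \geq 3$ uniformly and $i = 2$ as an exceptional case. For $i \geq 3$, applying Young's inequality
\[
\eta_i\langle a, b\rangle \leq \frac{\eta_i\tau_{i-1}}{2L_{i-1}}\|a\|^2 + \frac{\eta_i L_{i-1}}{2\tau_{i-1}}\|b\|^2
\]
to the inner-product term in $\D_i$ exactly cancels the gradient-squared term, and the bound $\eta_i \leq \tau_{i-1}/(4L_{i-1})$ reduces $\D_i$ to $\tfrac{1}{8}\|z_{i-1}-z_i\|^2 - \tfrac{1}{2}\|z_i - y_{i-1}\|^2$. Using the identity $z_{i-1} - y_{i-1} = (1-\beta)(z_{i-1} - y_{i-2})$ together with $\|z_{i-1}-z_i\|^2 \leq 2(1-\beta)^2\|z_{i-1}-y_{i-2}\|^2 + 2\|z_i - y_{i-1}\|^2$ further collapses this to $\tfrac{(1-\beta)^2}{4}\|z_{i-1}-y_{i-2}\|^2 - \tfrac{1}{4}\|z_i - y_{i-1}\|^2$, which telescopes over $i \geq 3$ (since $(1-\beta)^2 \leq 1$) to a boundary term in $\|z_2 - y_1\|^2$. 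For $i = 2$, $\tau_1 = 0$ both annihilates the second term of $\D_2$ and forces $x_0 = z_0$, $x_1 = z_1$; Young's inequality with constant $c = 5/2$ applied to $\eta_2\langle\nabla f(x_1) - \nabla f(x_0),\, z_1 - z_2\rangle$, combined with the LHS term $\tfrac{\eta_2}{2\eta_1}(\|z_1 - y_1\|^2 + \|z_2 - z_1\|^2)$ once moved to the right, produces exactly the $\tfrac{\eta_2}{2}(5L_1/2 - 1/\eta_1)\|z_1 - z_0\|^2$ piece of $\E$, while~\eqref{ineq:eta-k2} is used to make the residual $\|z_2 - z_1\|^2$ and $\|z_2 - y_1\|^2$ coefficients nonpositive.

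With the right-hand side of~\eqref{ineq:obj-bound} bounded by $\E + \Scal_k$ (modulo the nonpositive $-\tfrac{1}{2\beta}\|y_{k+1} - x^*\|^2$), convexity gives $\langle\nabla f(x_i),\, x_i - x^*\rangle \geq f(x_i) - f(x^*)$, so the LHS becomes $\sum_{i=1}^k\eta_{i+1}[(\tau_i+1)f(x_i) - \tau_i f(x_{i-1}) - f(x^*)]$. Rearrangement (the $f(x_0)$-boundary vanishes because $\tau_1 = 0$) yields
\[
(\tau_k+1)\eta_{k+1}(f(x_k) - f(x^*)) + \sum_{i=1}^{k-1}\bigl((\tau_i+1)\eta_{i+1} - \tau_{i+1}\eta_{i+2}\bigr)(f(x_i) - f(x^*)),
\]
and $\eta_{k+1}\tau_k \leq (\tau_{k-1}+1)\eta_k$ makes every coefficient nonnegative with total $\sum_{i=2}^{k+1}\eta_i$. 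Dropping the intermediate nonnegative summands gives the bound on $f(x_k) - f(x^*)$, while Jensen's inequality on the weighted average yields the bound on $f(\overline{x}_k) - f(x^*)$. Boundedness of $\{x_k\}, \{y_k\}, \{z_k\}$ is immediate from membership in the compact set $P$.

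The main obstacle is the $i = 2$ initialization: with $\tau_1 = 0$, the Young's cancellation that works uniformly for $i \geq 3$ fails, and one must hand-tune the Young's constant to precisely reproduce the $(5L_1/2 - 1/\eta_1)$ coefficient in $\E$ by leaning on the LHS quadratic $\tfrac{\eta_2}{2\eta_1}(\|z_1-y_1\|^2 + \|z_2-z_1\|^2)$. A secondary but nontrivial check is that the $\|z_2 - y_1\|^2$ residual produced by the $i \geq 3$ telescoping does not clash with the $i = 2$ analysis; once~\eqref{ineq:eta-k2} is invoked this closes cleanly.
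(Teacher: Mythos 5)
Your overall architecture matches the paper's: for $i\ge3$ you bound $\D_i$ via Cauchy--Schwarz, Young's inequality, the decomposition $\|z_{i-1}-z_i\|^2\le 2(1-\beta)^2\|z_{i-1}-y_{i-2}\|^2+2\|z_i-y_{i-1}\|^2$, and $\eta_i\le\tau_{i-1}/(4L_{i-1})$ to obtain a telescoping term $\tfrac14\|z_{i-1}-y_{i-2}\|^2-\tfrac14\|z_i-y_{i-1}\|^2$; then convexity, nonnegativity of the coefficients $(\tau_i+1)\eta_{i+1}-\tau_{i+1}\eta_{i+2}$ from~\eqref{ineq:eta-k3}, and Jensen. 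This is exactly the paper's route. Your boundedness argument (all iterates stay in the compact set $P$ by construction) is simpler than the paper's, which instead extracts a bound on $\|z_{k+1}-x^*\|^2$ from the one-step inequality, and it is valid here since $P$ is assumed compact.

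The gap is precisely where you flagged the obstacle: the $i=2$ case. As described, you absorb the Young residual $\frac{\eta_2 L_1}{2c}\|z_1-z_2\|^2$ (with $c=5/2$, i.e.\ coefficient $\frac{\eta_2 L_1}{5}$) into the left-hand-side term $\frac{\eta_2}{2\eta_1}\|z_2-z_1\|^2$. That requires $\frac{\eta_2 L_1}{5}\le\frac{\eta_2}{2\eta_1}$, i.e.\ $\eta_1\le\frac{5}{2L_1}$, which is \emph{not} implied by~\eqref{ineq:eta-k2}: $\eta_1$ is free and may be large, and this is exactly the regime in which the $(5L_1/2-1/\eta_1)$ coefficient in $\E$ is positive and must be retained. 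The route that closes (the paper's) is: use~\eqref{def:adaptive-step-size-1} to write $\|\nabla f(x_1)-\nabla f(x_0)\|=L_1\|z_1-z_0\|$, apply the triangle inequality $\|z_1-z_2\|\le\|z_1-y_1\|+\|z_2-y_1\|$ (with $y_1=y_0=z_0$ since $\beta_1=0$), and use Young only on the cross term, $\eta_2 L_1\|z_1-z_0\|\,\|z_2-y_1\|\le\frac{\eta_2L_1}{4}\|z_1-z_0\|^2+\eta_2L_1\|z_2-y_1\|^2$, giving the coefficient $\frac{5\eta_2L_1}{4}$ on $\|z_1-z_0\|^2$. The $\|z_2-y_1\|^2$ residual is absorbed by the $-\tfrac12\|z_2-y_1\|^2$ already inside $\D_2$ using $\eta_2L_1\le\tfrac14$, leaving $-\tfrac14\|z_2-y_1\|^2$ to cancel the head of the $i\ge3$ telescope; the LHS term $\frac{\eta_2}{2\eta_1}\|z_2-z_1\|^2$ is simply dropped as nonnegative, and only $\frac{\eta_2}{2\eta_1}\|z_1-y_1\|^2$ is carried to the right to produce the $-\frac{\eta_2}{2\eta_1}\|z_1-z_0\|^2$ part of $\E$. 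Routing the residual through $\|z_1-z_2\|^2\le2\|z_1-z_0\|^2+2\|z_2-y_1\|^2$ instead would still yield a valid bound, but with a constant strictly larger than $5L_1/2$, so it would not reproduce the stated $\E$.
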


Using~\cref{theorem:convex-obj-decrease}, we now establish convergence rates of \adcgs for convex objective functions under adaptive stepsizes and inexact inner solves controlled by $\{\delta_k\}$. Here, $\Scal_k$ collects the error contributions induced by the inner tolerances $\{\delta_k\}$, and a key step in the next result is to upper bound this accumulation explicitly.
Moreover, $\delta_k$ determines the inner computational effort needed to obtain $z_k$: by~\cref{theorem:cg-complexity}, the $k$-th subproblem can be solved with at most $T_k=\lceil 6D^2/(\eta_k\delta_k)\rceil$ iterations of the inner CG routine, \ie, LMO calls.
The corollaries below provide simple schedules for $\{\delta_k\}$ that keep $\Scal_k$ under control while preserving acceleration. The proof is given in Appendix~\ref{app:proof-corollary-convex-sublinear}.

\begin{corollary}\label[corollary]{corollary:convex-sublinear}
  Suppose that $L_k$ is defined in~\eqref{def:adaptive-step-size-1} and~\eqref{def:adaptive-step-size}.
  Let $\tau_1=0$ and $\tau_k=k/2$ for all $k\ge2$, let $\beta_k$ be given by~\eqref{ineq:beta} with $\beta\in(0,1-\sqrt{6}/3]$, and let $\delta_k=\frac{D_0^2}{k^{1+\theta}(k+1)}$, where $D_0:=\|z_0-x^*\|$ and $\theta>0$.
  Let $\{\eta_k\}$ satisfy
  \begin{align*}
    \eta_k &= \begin{cases}
      \min\{(1 - \beta)\eta_1, \frac{1}{4L_1}\}, &\text{if}\ k = 2,\\
      \min\{\eta_2, \frac{1}{4L_2}\}, &\text{if}\ k = 3,\\
      \min\left\{\frac{k}{k-1}\eta_{k-1}, \frac{k-1}{8L_{k-1}}\right\}, &\text{if}\ k \geq 4.
    \end{cases}
  \end{align*}
  Let $\{x_k\}$, $\{y_k\}$, and $\{z_k\}$ be the sequences generated by \adcgs.
  Then, it holds that $\eta_k \geq \frac{k}{12 \hat{L}_{k-1}}$ for all $k \geq 2$, where $\hat{L}_{k} := \max\left\{\frac{1}{4(1-\beta)\eta_1}, \max_{1\leq i \leq k} L_i\right\}$,
  \begin{align*}
    f(x_k) - f(x^*) \leq \frac{12\hat{L}_k}{k(k + 1)}\Rcal_1, \quad \text{and}\quad f(\overline{x}_k) - f(x^*) \leq \frac{1}{\sum_{i=2}^{k+1}\frac{i}{6\hat{L}_{i-1}}}\Rcal_1,
  \end{align*}
  where $\Rcal_1 := 2\E + \frac{(1-\beta)(2+\theta)\eta_1D_0^2}{2\theta}$, with $\E$ and $\overline{x}_k$ defined in~\cref{theorem:convex-obj-decrease}.
\end{corollary}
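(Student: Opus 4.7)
The plan is to specialize Theorem~\ref{theorem:convex-obj-decrease} to the parameters of the corollary and then bound the error sum $\Scal_k$ using the prescribed decay of $\delta_k$. First, I verify the schedules $\{\beta_k\}$, $\{\tau_k\}$, and $\{\eta_k\}$ satisfy hypotheses~\eqref{ineq:beta}, \eqref{ineq:eta-k2}, and \eqref{ineq:eta-k3}. With $\tau_k = k/2$ for $k \geq 2$, the conditions $\eta_k \leq \tau_{k-1}/(4L_{k-1})$ and $\eta_k \leq (\tau_{k-2}+1)\eta_{k-1}/\tau_{k-1}$ reduce to $\eta_k \leq (k-1)/(8L_{k-1})$ and $\eta_k \leq \frac{k}{k-1}\eta_{k-1}$, which match the two terms in the defining $\min$. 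The nontrivial constraint $\eta_k \leq 2(1-\beta)^2\eta_{k-1}$ is where the range of $\beta$ enters: since $\eta_k \leq \frac{k}{k-1}\eta_{k-1}$, it is enough to have $2(1-\beta)^2 \geq \sup_{k\geq 4}\frac{k}{k-1} = \tfrac{4}{3}$, equivalent to $\beta \leq 1 - \sqrt{6}/3$.

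Next, I prove $\eta_k \geq k/(12\hat{L}_{k-1})$ by induction. For $k = 2, 3$, the two $\min$ terms dominate $k/(12\hat{L}_{k-1})$ because $(1-\beta)\eta_1 \geq 1/(4\hat{L}_1)$ (by definition of $\hat{L}_1$) and $1/(4L_{k-1}) \geq 1/(4\hat{L}_{k-1})$. For $k \geq 4$, the inductive step uses (i) $(k-1)/(8L_{k-1}) \geq k/(12\hat{L}_{k-1})$, which is $3(k-1) \geq 2k$, and (ii) $\frac{k}{k-1}\eta_{k-1} \geq \frac{k}{k-1}\cdot \frac{k-1}{12\hat{L}_{k-2}} \geq k/(12\hat{L}_{k-1})$ using monotonicity of $\hat{L}$.

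The main work is upper bounding $\Scal_k = \sum_{i=1}^k \eta_{i+1}(\delta_i + \delta_{i+1})$ tightly enough to fit the constant in $\Rcal_1$. Iterating $\eta_k/k \leq \eta_{k-1}/(k-1)$ for $k \geq 4$ and using $\eta_3 \leq \eta_2 \leq (1-\beta)\eta_1$ as base cases yields $\eta_k \leq k(1-\beta)\eta_1/2$ for all $k \geq 2$. Plugging this together with $\delta_j = D_0^2/(j^{1+\theta}(j+1))$ makes each summand of order $(1-\beta)\eta_1 D_0^2 / i^{1+\theta}$ (after using $(i+1)^\theta(i+2) \geq i^{1+\theta}$ to align the exponents in the $\delta_{i+1}$ contribution). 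An integral test gives $\sum_{i=1}^\infty 1/i^{1+\theta} \leq (1+\theta)/\theta$, and careful accounting of the constants produces $2(\E + \Scal_k) \leq \Rcal_1$.

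Finally, I combine the stepsize lower bound with Theorem~\ref{theorem:convex-obj-decrease}. Using $(\tau_k + 1)\eta_{k+1} = \frac{k+2}{2}\eta_{k+1} \geq \frac{(k+1)(k+2)}{24\hat{L}_k}$ and the crude estimate $\frac{2k}{k+2} \leq 2$, the first bound gives $f(x_k) - f(x^*) \leq \frac{24\hat{L}_k(\E + \Scal_k)}{(k+1)(k+2)} \leq \frac{12\hat{L}_k \Rcal_1}{k(k+1)}$. For the averaged iterate, the stepsize lower bound gives $\sum_{i=2}^{k+1}\eta_i \geq \sum_{i=2}^{k+1} i/(12\hat{L}_{i-1})$, so $f(\bar{x}_k) - f(x^*) \leq (\E + \Scal_k)/\sum_{i=2}^{k+1}\eta_i \leq \Rcal_1 / \sum_{i=2}^{k+1} i/(6\hat{L}_{i-1})$. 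The main obstacle is the $\Scal_k$ calculation: the specific $1/(k^{1+\theta}(k+1))$ scaling of $\delta_k$ is calibrated precisely so that, after multiplying by $\eta_{i+1} = O(i)$, each summand collapses to a clean $1/i^{1+\theta}$ term whose tail contributes exactly the $(2+\theta)/(2\theta)$ factor in $\Rcal_1$.
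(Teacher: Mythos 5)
Your proposal is correct and follows essentially the same route as the paper's proof: verify that the schedule satisfies~\eqref{ineq:beta}--\eqref{ineq:eta-k3} via $2(1-\beta)^2\ge 4/3$, prove $\eta_k \ge \frac{k}{12\hat{L}_{k-1}}$ by the same induction, establish the upper bound $\eta_k \le \frac{k}{2}\eta_2 \le \frac{k}{2}(1-\beta)\eta_1$, and control $\Scal_k$ by the same integral-test computation before substituting into \cref{theorem:convex-obj-decrease}. The only caveat --- one you share with the paper's own write-up --- is that passing from $(\E+\Scal_k)/((\tau_k+1)\eta_{k+1})$ to the stated form silently absorbs a factor of $2$ on $\Scal_k$ (your claim $2(\E+\Scal_k)\le\Rcal_1$ would require $\Scal_k$ to be at most half of the bound $\frac{(1-\beta)(2+\theta)\eta_1 D_0^2}{2\theta}$ that you actually prove), so the constant in $\Rcal_1$ is loose by at most a factor of $2$ in both arguments.
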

\begin{remark}[On the choice of $\delta_k$]\label{remark:choice-delta}
  The schedule of $\delta_k$ in~\cref{corollary:convex-sublinear} uses $D_0$, which is unknown in practice.
  In the analysis, $D_0$ can be replaced by the diameter $D$ of $P$, and the same argument only requires controlling $S_k$.
  Moreover, we define $\tilde{L}_k := \max_{2\le i\le k} L_i$ if $\max_{2\le i\le k} L_i > 0$ and $\tilde{L}_k := L_1$ otherwise.
  In our implementations, we use $\tilde{\delta}_k = \frac{\tilde{L}_kD^2}{k^{1+\theta}(k+1)}$ or $\delta_{1:k} = \frac{L_{1:k} D^2}{k^{1+\theta}(k+1)}$, where $L_{1:k}$ denotes the most recent nonzero value among $L_1,\ldots,L_{k}$.
  These choices preserve the convergence bounds up to constant factors.
  Since $\delta_{1:k} \leq \tilde{\delta}_k$, $\delta_{1:k}$ generally requires more LMO calls.
\end{remark}
Formally setting $\theta = 0$ in~\cref{corollary:convex-sublinear}, we obtain $\Scal_k\leq\eta_2 D_0^2(1 + 2\log (k+1))/2$ and thus $\O(\frac{\log k}{k^2})$.
When we fix the number of iterations $N$ and use $\delta_k = \frac{D_0^2}{Nk}$, we have the simpler bound. The proof is given in Appendix~\ref{app:proof-corollary-convex-fixed-iteration}.
\begin{corollary}\label[corollary]{corollary:convex-fixed-iteration}
  Suppose that $L_k$ is defined in~\eqref{def:adaptive-step-size-1} and~\eqref{def:adaptive-step-size}, and fix the number of outer iterations $N\ge1$.
  Let $\delta_k=\frac{D_0^2}{Nk}$ and let $\{\beta_k\}$, $\{\tau_k\}$, and $\{\eta_k\}$ follow the same rule as in~\cref{corollary:convex-sublinear}.
  Let $\{x_k\}$, $\{y_k\}$, and $\{z_k\}$ be the sequences generated by \adcgs.
  Then, it holds that
  \begin{align*}
    f(x_N) - f(x^*) &\leq \frac{12\hat{L}_N}{N(N+1)}\left(2\E + \frac{3(1-\beta)\eta_1 D_0^2}{2}\right),
  \end{align*}
  with $\E$ defined in~\cref{theorem:convex-obj-decrease}.
\end{corollary}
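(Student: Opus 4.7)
The plan is to reuse the structural machinery from~\cref{corollary:convex-sublinear} and only re-derive the bound on the accumulated error term $\Scal_N$, since the sole change between the two corollaries is the tolerance schedule. Because the rules for $\{\tau_k\}$, $\{\beta_k\}$, and $\{\eta_k\}$ are identical, every statement there about those sequences transfers verbatim; in particular, the lower bound $\eta_k \geq k/(12\hat{L}_{k-1})$ and the growth bound $\eta_k \leq (k/2)\eta_2$ for all $k\geq 2$, together with $\eta_2 \leq (1-\beta)\eta_1$, continue to hold.

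First I would invoke inequality~\eqref{ineq:convex-sublinear-bound} at $k=N$, namely
\[
  f(x_N) - f(x^*) \leq \frac{12\hat{L}_N}{N(N+1)}(2\E + \Scal_N),
\]
which reduces the problem to bounding $\Scal_N = \sum_{i=1}^N \eta_{i+1}(\delta_i + \delta_{i+1})$ under the new schedule $\delta_k = D_0^2/(Nk)$.

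Next I would substitute $\eta_{i+1} \leq (i+1)\eta_2/2$ and the new $\delta_k$ to obtain
\[
  \Scal_N \leq \frac{\eta_2 D_0^2}{2N}\sum_{i=1}^N\left(\frac{i+1}{i} + 1\right) = \eta_2 D_0^2\left(1 + \frac{H_N}{2N}\right),
\]
where $H_N := \sum_{i=1}^N 1/i$. An elementary monotonicity check shows that $H_N/N$ is decreasing in $N$ with maximum value $1$ attained at $N=1$, so $\Scal_N \leq (3/2)\eta_2 D_0^2 \leq (3/2)(1-\beta)\eta_1 D_0^2$. Plugging this back into the preceding display gives exactly the claimed bound.

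The step that most requires care is the bound on $\Scal_N$: naively, the factor $\eta_{i+1}$ grows linearly in $i$, so one might fear the sum either diverges or scales unfavorably with $N$. The schedule $\delta_k = D_0^2/(Nk)$ is calibrated precisely so that $\eta_{i+1}(\delta_i + \delta_{i+1}) = \O(1/N)$, making the total sum over $i=1,\ldots,N$ bounded by a constant independent of $N$. This is exactly the improvement over the anytime schedule of~\cref{corollary:convex-sublinear}, which carried a logarithmic factor; beyond that single observation, no new technical obstacle appears.
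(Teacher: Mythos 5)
Your proposal is correct and follows essentially the same route as the paper: invoke~\eqref{ineq:convex-sublinear-bound} at $k=N$, use $\eta_{i+1}\leq \frac{i+1}{2}\eta_2$ to reduce $\Scal_N$ to $\eta_2 D_0^2(1+\frac{H_N}{2N})$, and conclude $\Scal_N\leq \frac{3}{2}\eta_2 D_0^2\leq\frac{3}{2}(1-\beta)\eta_1 D_0^2$. The only (immaterial) difference is that you bound the harmonic sum via the monotonicity of $H_N/N$, whereas the paper uses the integral test $H_N\leq 1+\log N$ followed by $1+\log N\leq N$.
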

The stepsize rule in~\cref{corollary:convex-sublinear} is line-search-free. 
While $\eta_1$ is free to choose, overly large (resp., small) values may yield a bound with a
$\|z_1-z_0\|^2$ term in $\E$ (resp., make $((1-\beta)\eta_1)^{-1}$ dominate $\hat{L}_k$), so
$\eta_1$ should be set at a reasonable scale.
We consider two options for initializing $\eta_1$.
\paragraph{(i) Line-search-free initialization:}
Set $\eta_1=\frac{c}{4(1-\beta)L_0}$ for some $c>0$ and $L_0>0$, where $L_0$ is an initial Lipschitz estimate~\citep{Li2025-cb}. For example, take $z_{-1}\in P$ with $z_{-1}\neq z_0$
(a perturbation of $z_0$) and set
\begin{align}
  L_0 := \frac{\|\nabla f(z_{-1}) - \nabla f(z_0)\|}{\|z_{-1} - z_0\|}. \label{def:initial-curvature}
\end{align}
\paragraph{(ii) Line search at the first iteration:}
To avoid an explicit dependence on the initial displacement (\eg, $\|z_1-z_0\|^2$), perform a line search only at the first iteration to find $\eta_1$ satisfying $\eta_1 \leq \frac{2}{5L_1}$.
The procedure is given in~\cref{alg:ls-first-iteration}.
Assume that $f$ is locally smooth. Since $\{x_k\}$ is bounded by~\cref{corollary:convex-sublinear}, $f$ is $L$-smooth on the set of iterates.
The line search guarantees $\frac{2}{5\gamma L} \leq \frac{2}{5\gamma L_{1,j-1}} < \eta_1 \leq \frac{2}{5L_1}$, which implies $\hat{L}_k \leq \frac{5\gamma L}{8(1-\beta)}$.
Consequently, we obtain
\begin{align*}
  f(x_k) - f(x^*) &\leq \frac{15\gamma L\sigma D_0^2}{4k(k+1)} \quad \text{and}\quad f(\overline{x}_k) - f(x^*) \leq \frac{15\gamma L\sigma D_0^2}{4k(k+3)},
\end{align*}
where $\sigma =\frac{2}{\beta(1-\beta)} + \frac{(2 + \theta)\eta_1}{\theta}$.
\cref{alg:ls-first-iteration} requires at most $\O(\log_\gamma(L/L_0))$ iterations.

\begin{algorithm}[t]
  \caption{Line Search at the First Iteration for \adcgs}\label{alg:ls-first-iteration}
  \begin{algorithmic}[1]
    \Input{$z_0 \in P$, $\beta\in(0, 1 - \frac{\sqrt{6}}{3}]$, $\delta_1 > 0$, and $L > 0$}
    \STATE Choose $0 < L_0 \leq L$ (\eg,~\eqref{def:initial-curvature}), and $\gamma > 1$
    \FOR{$j = 0, 1, 2, \ldots$}
    \STATE $\eta_{1,j} = \frac{1}{4(1-\beta)L_0\gamma^j}$,
    $z_{1,j} = \cg(\nabla f(z_0), z_0, \eta_{1,j}, \delta_1)$,
    $L_{1,j} := \frac{\|\nabla f(z_{1,j}) - \nabla f(z_0)\|}{\|z_{1,j} - z_0\|}$
    \IF{$\eta_{1,j} \leq \frac{2}{5L_{1,j}}$}
      \RETURN $z_{1,j}, \eta_{1,j}, L_{1,j}$
    \ENDIF
    \ENDFOR
  \end{algorithmic}
\end{algorithm}

\citet{Li2025-cb} proposed an alternative choice of $\tau_k$ that allows larger stepsizes. We also derive accelerated rates. The proof is given in Appendix~\ref{app:proof-corollary-convex-sublinear-2}.
\begin{corollary}\label[corollary]{corollary:convex-sublinear-2}
  Suppose that $L_k$ is defined in~\eqref{def:adaptive-step-size-1} and~\eqref{def:adaptive-step-size}.
  Let $\tau_1=0$, $\tau_2=1$, and $\tau_k=\tau_{k-1}+\frac{\alpha}{2}+\frac{2(1-\alpha)\eta_k L_{k-1}}{\tau_{k-1}}$ for all $k\ge3$ with a constant $\alpha\in[0,1]$, let $\beta_k$ be given by~\eqref{ineq:beta} with $\beta\in(0,1-\sqrt{6}/3]$, and let $\delta_k=\frac{D_0^2}{k^{1+\theta}(k+1)}$ for some $\theta>0$.
  Let $\{\eta_k\}$ satisfy
  \begin{align*}
    \eta_k &= \begin{cases}
      \min\left\{(1 - \beta)\eta_1, \frac{1}{4L_1}\right\}, &\text{if}\ k = 2,\\
      \min\left\{\frac{4}{3}\eta_{k-1},\frac{\tau_{k-2} + 1}{\tau_{k-1}}\eta_{k-1}, \frac{\tau_{k-1}}{4L_{k-1}}\right\}, &\text{if}\ k \geq 3.
    \end{cases}
  \end{align*}
  Let $\{x_k\}$, $\{y_k\}$, and $\{z_k\}$ be the sequences generated by \adcgs.
  Then, it holds that
  \begin{align*}
    f(x_k) - f(x^*) \leq \frac{12\hat{L}_k\Rcal_2 }{(\alpha k + 4 - 2\alpha)(\alpha k + 3 - 2\alpha)} \quad \text{and} \quad f(\overline{x}_k) - f(x^*) \leq \frac{12\hat{L}_k\Rcal_2 }{6k + \alpha k(k-3)},
  \end{align*}
  where $\Rcal_2 := 2\E + \frac{D_0^2(2 + \theta)}{8\theta \underline{L}_k}$ and $\underline{L}_k := \min_{1 \leq i \leq k}L_i$, with $\E$ and $\overline{x}_k$  as in~\cref{theorem:convex-obj-decrease}, and $\hat{L}_k$ as in~\cref{corollary:convex-sublinear}.
\end{corollary}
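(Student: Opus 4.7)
The plan is to deduce the rates by plugging into~\cref{theorem:convex-obj-decrease} with appropriately bounded $(\tau_k+1)\eta_{k+1}$, $\sum_{i=2}^{k+1}\eta_i$, and $\Scal_k$, mirroring the proof of~\cref{corollary:convex-sublinear}. I would proceed in four steps: (i) verify the stepsize rule satisfies the hypotheses of~\cref{proposition:convex-obj-ineq}; (ii) obtain two-sided bounds on $\tau_k$; (iii) prove the inductive lower bound $(\tau_k+1)\eta_{k+1}\geq (\alpha k+4-2\alpha)(\alpha k+3-2\alpha)/(12\hat{L}_k)$; (iv) upper bound $\Scal_k$ using the constraint the rule places on $\eta_{k+1}$.

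For (i), \eqref{ineq:beta} and \eqref{ineq:eta-k2} are immediate, and in \eqref{ineq:eta-k3} the entries $\tau_{k-1}/(4L_{k-1})$ and $(\tau_{k-2}+1)\eta_{k-1}/\tau_{k-1}$ appear verbatim in the rule, while the remaining entry $2(1-\beta)^2\eta_{k-1}\geq \frac{4}{3}\eta_{k-1}$ is implied by $\beta\leq 1-\sqrt{6}/3$. For (ii), nonnegativity of $2(1-\alpha)\eta_k L_{k-1}/\tau_{k-1}$ yields $\tau_k\geq \tau_{k-1}+\alpha/2$, while substituting the rule's own constraint $\eta_k\leq \tau_{k-1}/(4L_{k-1})$ bounds the same term above by $(1-\alpha)/2$, so $\tau_k\leq \tau_{k-1}+\frac{1}{2}$. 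Starting from $\tau_2=1$, this yields $(\alpha(k-2)+2)/2\leq \tau_k\leq k/2$, and in particular $\tau_k+1\geq (\alpha k+4-2\alpha)/2$.

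Step (iii) is the main obstacle. I would prove by induction the auxiliary bound $\eta_{k+1}\hat{L}_k\geq (\alpha k+3-2\alpha)/6$, which combined with (ii) yields the product bound. The base case follows by comparing $\eta_2=\min\{(1-\beta)\eta_1,1/(4L_1)\}$ against each constituent of $\hat{L}_1=\max\{1/(4(1-\beta)\eta_1),L_1\}$. For the inductive step, one case-splits on which entry of the $\min$ defining $\eta_{k+1}$ is active: the branch $\tau_k/(4L_k)$ is direct from (ii) and $L_k\leq \hat{L}_k$; the branch $(\tau_{k-1}+1)\eta_k/\tau_k$ uses the induction hypothesis on $\eta_k\hat{L}_{k-1}$ together with the two-sided bounds on $\tau_{\cdot}$ and the monotonicity $\hat{L}_{k-1}\leq \hat{L}_k$; the branch $\frac{4}{3}\eta_k$ is handled analogously. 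Reconciling the three branches so as to yield the stated constant is the delicate calculation.

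For (iv), the bound $\tau_k\leq k/2$ combined with $\eta_{k+1}\leq \tau_k/(4L_k)$ gives $\eta_{k+1}\leq k/(8L_k)\leq k/(8\underline{L}_k)$; plugging this into $\Scal_k=\sum_{i=1}^k\eta_{i+1}(\delta_{i+1}+\delta_i)$ with $\delta_k=D_0^2/(k^{1+\theta}(k+1))$ and applying the same integral estimate as in~\cref{corollary:convex-sublinear} yields $\Scal_k\leq D_0^2(2+\theta)/(8\theta\underline{L}_k)$. Substituting (iii) and (iv) into~\cref{theorem:convex-obj-decrease} produces the last-iterate bound; the averaged bound follows by lower bounding $\sum_{i=2}^{k+1}\eta_i\geq \frac{1}{6}\sum_{i=1}^k (3+\alpha(i-2))/\hat{L}_i$ from the inductive bound on $\eta_i$, and the final simplification $\sum_{i=1}^k(3+\alpha(i-2))/\hat{L}_i\geq (6k+\alpha k(k-3))/(2\hat{L}_k)$ uses the monotonicity $\hat{L}_i\leq \hat{L}_k$.
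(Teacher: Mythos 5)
Your overall architecture matches the paper's proof: verify \eqref{ineq:eta-k2}--\eqref{ineq:eta-k3} via $2(1-\beta)^2\geq 4/3$, establish the two-sided bound $1+\tfrac{\alpha(k-2)}{2}\leq\tau_k\leq\tfrac{k}{2}$ from $0\leq 2(1-\alpha)\eta_kL_{k-1}/\tau_{k-1}\leq(1-\alpha)/2$, bound $\Scal_k$ via $\eta_{i+1}\leq\tau_i/(4L_i)\leq i/(8\underline{L}_k)$ and the integral test, and close by substituting into \cref{theorem:convex-obj-decrease}. Steps (i), (ii), and (iv) are exactly what the paper does.

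However, your step (iii) contains a genuine gap: the auxiliary induction hypothesis $\eta_{k+1}\hat{L}_k\geq(\alpha k+3-2\alpha)/6$ is false. At $k=1$ the stepsize rule gives $\eta_2=\min\{(1-\beta)\eta_1,\tfrac{1}{4L_1}\}=\tfrac{1}{4\hat{L}_1}$, so $\eta_2\hat{L}_1=\tfrac14$, whereas your claim demands $\eta_2\hat{L}_1\geq(3-\alpha)/6\geq\tfrac13$ for $\alpha\in[0,1]$. The inductive step also fails in the branch $\eta_{k+1}=\tau_k/(4L_k)$: there one only gets $\eta_{k+1}\hat{L}_k\geq(2+\alpha(k-2))/8$, which is strictly smaller than $(3+\alpha(k-2))/6$. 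The provable bound (and the one the paper establishes in Appendix~\ref{app:proof-corollary-convex-sublinear-2}) is $\eta_{k+1}\geq\frac{3+\alpha(k-2)}{12\hat{L}_k}$, i.e., your constant is off by a factor of $2$. With the correct constant, $(\tau_k+1)\eta_{k+1}\geq\frac{(\alpha k+4-2\alpha)(\alpha k+3-2\alpha)}{24\hat{L}_k}$, and the stated prefactor $\frac{12\hat{L}_k}{(\alpha k+4-2\alpha)(\alpha k+3-2\alpha)}$ is recovered because the factor of $2$ is absorbed into $\Rcal_2$, which is deliberately defined with $2\E$ (rather than $\E$) precisely to accommodate $\frac{1}{(\tau_k+1)\eta_{k+1}}(\E+\Scal_k)\leq\frac{12\hat{L}_k}{(\cdot)(\cdot)}(2\E+2\Scal_k)$; the same remark applies to the factor $\tfrac16$ versus $\tfrac{1}{12}$ in your treatment of $\sum_{i=2}^{k+1}\eta_i$ for the averaged iterate. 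So the fix is local, but as written the key intermediate claim cannot be proved and the induction you propose would not close.
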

When $\alpha=1$, \cref{corollary:convex-sublinear-2} recovers \cref{corollary:convex-sublinear}. For $\alpha\in(0,1)$, it gives accelerated variants with different constants,
whereas $\alpha=0$ is included only as an empirical variant and sometimes performs well in practice (see, \eg, \cref{fig:logistic-regression-gisette}).

\section{Linear Rate for Locally Strongly Convex Objective Functions}
\begin{algorithm}[t]
  \caption{\adcgs for locally strongly convex problems}\label{alg:adcgs-strongly-convex}
  \begin{algorithmic}[1]
    \Input $w_0 \in P$; $\eta_1 > 0$; an estimate $\phi_0> 0$ satisfying $f(w_0) - f(x^*) \leq \phi_0$
    \FOR{$s = 1,\ldots,$}
      \STATE Let $w_s$ be the last point output by the \adcgs (\cref{alg:adcgs} with~\cref{alg:ls-first-iteration}) when run for
      \begin{align*}
        N = \left\lceil\sqrt{\frac{15\gamma L}{\mu}\left(\frac{1}{\beta(1 - \beta)} + \frac{3}{2}\eta_1\right)}\right\rceil
      \end{align*}
      iterations with parameters $L$ and $\mu$ given by local smoothness and local strong convexity, $x_0 = w_{s-1}$, $\eta_1 > 0$, $\beta_k$ defined in~\eqref{ineq:beta}, $\delta_k = \frac{2\phi_0}{2^s\mu Nk}$, $\eta_k$ and $\tau_k$ defined in~\cref{corollary:convex-sublinear}, and $L_k$ defined in~\eqref{def:adaptive-step-size-1} and~\eqref{def:adaptive-step-size}.
    \ENDFOR
    \Output $w_s$
  \end{algorithmic}
\end{algorithm}
In this section, we establish the linear convergence of \adcgs for locally strongly convex objective functions.
We work under the assumption that $f$ is locally strongly convex and locally smooth.
We present \adcgs for locally strongly convex objective functions in~\cref{alg:adcgs-strongly-convex}, which repeatedly runs \cref{alg:adcgs} for a fixed number of iterations $N$.
We then establish the linear convergence rate of \cref{alg:adcgs-strongly-convex}.
Its proof can be found in Appendix~\ref{app:proof-strongly-convex-linear}.
\begin{theorem}\label{theorem:strongly-convex-linear}
  Suppose that $f$ is locally strongly convex and locally smooth.
  Let $\{w_s\}$ be the sequence generated by~\cref{alg:adcgs-strongly-convex}.
  Then, $f(w_s) - f(x^*) \leq \frac{\phi_0}{2^s}$ holds for $s \geq 0$.
  Equivalently, to achieve $f(w_s) - f(x^*) \leq \epsilon$, the total number of FOO calls is at most $O(N_0\log_2\lceil\max\{\phi_0/\epsilon,1\}\rceil)$, and the total number of LMO calls is at most $O(\mu L D^2/\epsilon + N_0 \log_2\lceil\max\{\phi_0/\epsilon,1\}\rceil)$, where $N_0 = \sqrt{\frac{\gamma L}{\mu}\bigl(\frac{1}{\beta(1 - \beta)} + \eta_1\bigr)}$.
\end{theorem}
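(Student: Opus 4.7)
The plan is to prove $f(w_s) - f(x^*) \leq \phi_0/2^s$ by induction on $s$, reducing each restart to one application of the convex analysis in~\cref{corollary:convex-fixed-iteration} with the line-search initialization of~\cref{alg:ls-first-iteration}. The base case $s=0$ is the given estimate $f(w_0) - f(x^*) \leq \phi_0$. For the inductive step, assuming $f(w_{s-1}) - f(x^*) \leq \phi_0/2^{s-1}$, I would first invoke $\mu$-strong convexity to obtain $\|w_{s-1}-x^*\|^2 \leq \tfrac{2}{\mu}(f(w_{s-1})-f(x^*)) \leq \tfrac{4\phi_0}{2^s\mu}$, so the initial squared distance $D_0^2$ of the next inner run already carries the desired geometric decay in $s$.

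Next I would apply~\cref{theorem:convex-obj-decrease} to the run of~\cref{alg:adcgs} started at $w_{s-1}$. The line search at the first iteration forces $\eta_1 \leq 2/(5L_1)$, which has two consequences used in the paragraph following~\cref{corollary:convex-sublinear}: the $\|z_1-z_0\|^2$ term in $\E$ becomes nonpositive, so $2\E \leq \|w_{s-1}-x^*\|^2/\beta$; and $\hat L_N \leq 5\gamma L/(8(1-\beta))$. The schedule $\delta_k = \tfrac{2\phi_0}{2^s\mu Nk}$ is the fixed-iteration choice from~\cref{corollary:convex-fixed-iteration} with $D_0^2$ replaced by its strong-convexity upper bound $\tfrac{2\phi_0}{2^s\mu}$, so replaying that corollary's telescoping estimate yields $\Scal_N = \O(\eta_2\phi_0/(2^s\mu))$. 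Substituting into $\tfrac{12\hat L_N}{N(N+1)}(2\E+\Scal_N)$ and using $\eta_2\leq(1-\beta)\eta_1$ gives a bound of the form $f(w_s)-f(x^*) \leq \tfrac{C\gamma L\phi_0}{2^s\mu N(N+1)}\bigl(\tfrac{1}{\beta(1-\beta)}+\tfrac{3\eta_1}{2}\bigr)$ for an absolute constant $C$; the choice of $N$ in~\cref{alg:adcgs-strongly-convex} is calibrated so that the right-hand side is at most $\phi_0/2^s$, closing the induction.

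For the complexity counts, $S=\lceil\log_2\max\{\phi_0/\epsilon,1\}\rceil$ restarts suffice to reach $\epsilon$-accuracy. Each restart performs $N=\O(\sqrt{L/\mu})$ outer iterations with one FOO call each, plus an $\O(\log_\gamma(L/L_0))$ line-search overhead, giving the total FOO bound $\O(N_0\log_2\lceil\max\{\phi_0/\epsilon,1\}\rceil)$. For the LMO count, I would combine~\cref{theorem:cg-complexity} with $\eta_k\geq k/(12\hat L_{k-1})$ from~\cref{corollary:convex-sublinear} to obtain $T_k\leq 6D^2/(\eta_k\delta_k)+1 \lesssim L\mu ND^2\cdot 2^s/\phi_0+1$; summing over $k=1,\dots,N$ and then over $s=1,\dots,S$ gives a geometric sum in $s$ dominated by the final epoch, which together with $2^S\asymp\phi_0/\epsilon$ collapses to the advertised $\O(\mu LD^2/\epsilon)$ bound, while the additive $+1$ terms aggregate into the $N_0\log_2$ contribution.

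The main obstacle is calibrating $\delta_k$ so the two sides of the trade-off close simultaneously: on the convergence side $\Scal_N$ must inherit the $1/2^s$ decay of $\|w_{s-1}-x^*\|^2$, which forces $\delta_k\propto 1/2^s$; on the complexity side the corresponding blow-up of $1/\delta_k$ in $T_k$ must remain tame enough, after the $k$-sum and the $s$-geometric series, to match the $\O(\mu LD^2/\epsilon)$ target. The schedule $\delta_k = \tfrac{2\phi_0}{2^s\mu Nk}$ is engineered to hit both targets at once, and verifying this delicate balance — in particular that the accumulated inner error and the LMO budget scale correctly across the restart loop — is the technical heart of the argument.
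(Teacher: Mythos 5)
Your proposal follows essentially the same route as the paper's proof: induction on the stage index, strong convexity to convert the function gap into the bound on $D_0^2=\|z_0-x^*\|^2$, an application of \cref{corollary:convex-fixed-iteration} with the first-iteration line search to contract by a factor of $2$, and the same accounting of FOO and LMO calls via \cref{theorem:cg-complexity}, the lower bound $\eta_k\geq k/(12\hat{L}_{k-1})$, and the geometric sum over stages. The only discrepancy is a factor-of-$2$ bookkeeping slip from your index shift (you derive $\|w_{s-1}-x^*\|^2\leq 4\phi_0/(2^s\mu)$ but then use $2\phi_0/(2^s\mu)$ as the bound on $D_0^2$), which affects only constants absorbed by the choice of $N$.
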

Similarly, by choosing a different value of $N$, one can establish linear convergence when $\tau_k$ and $\eta_k$ are defined as in~\cref{corollary:convex-sublinear-2}.
While linear convergence rates are available under additional geometric assumptions, such as polytopes~\citep{Lacoste-Julien2015-gb}, strongly convex sets~\citep{Garber2015-de}, or uniformly convex sets~\citep{Kerdreux2021-kc}, our result applies to an arbitrary compact convex set $P$ and highlights an oracle-complexity trade-off.
Compared with CG, both CGS and \adcgs substantially reduce the number of FOO calls at the cost of additional LMO calls.

\section{Numerical Experiments}\label{sec:numerical-experiments}
\begin{figure*}[!t]
  \centering
  \includegraphics[width=0.96\textwidth]{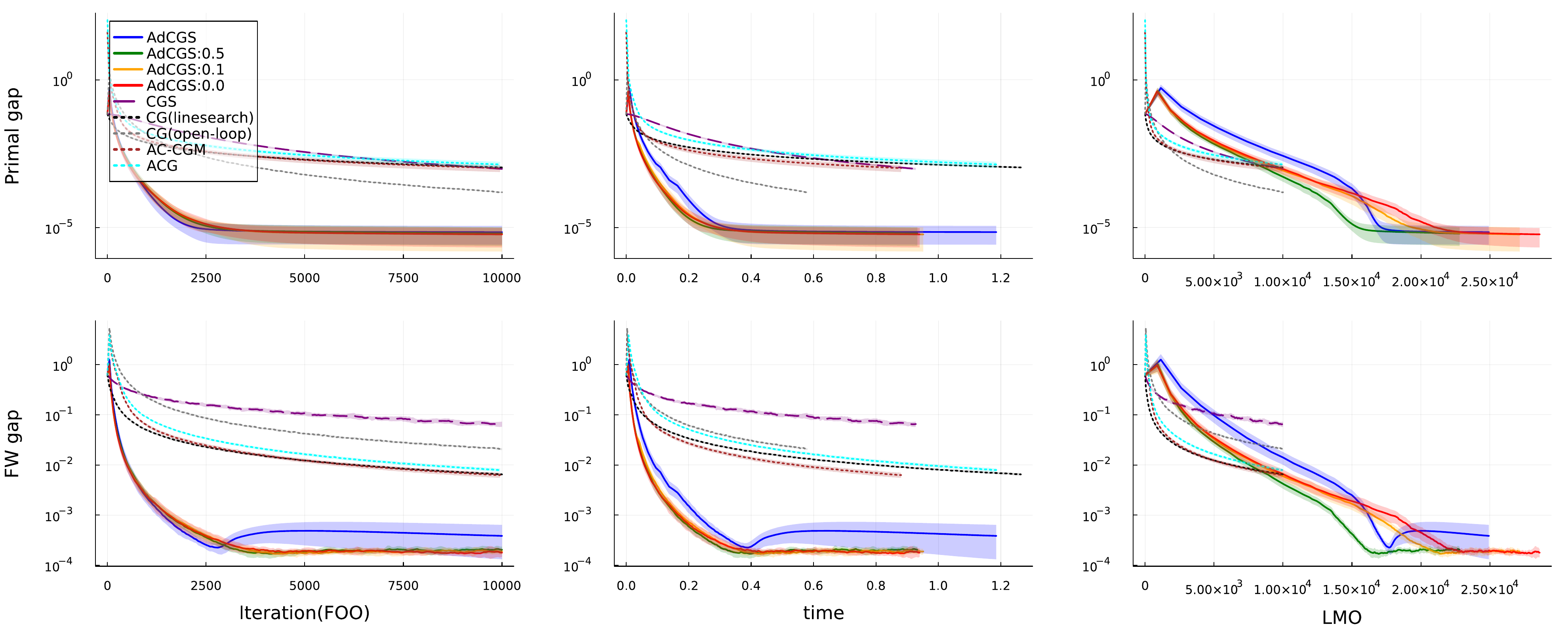}
  \caption{Least squares regression over the simplex with $(m,n) = (2500,500)$ compared with existing methods.}\label{fig:least-squares-simplex-base-500-2500}
\end{figure*}
In this section, we present numerical experiments to evaluate the performance of our algorithm.
All experiments were run in Julia~1.12 using FrankWolfe.jl~\citep{Besancon2022-lx}\footnote{\url{https://github.com/ZIB-IOL/FrankWolfe.jl}} on a Mac Studio equipped with an Apple M4 Max processor and 128GB of LPDDR5 memory.
In the plots, \adcgs denotes \adcgs with $\alpha = 1.0$. Labels \adcgs:0.5, \adcgs:0.1, and \adcgs:0.0 correspond to $\alpha = 0.5, 0.1, 0.0$, respectively. We use $\beta = 1 - \sqrt{6}/3$, $\eta_1 = \frac{2}{5L_0}$ with $L_0$ defined in~\eqref{def:initial-curvature}, and $\theta = 10^{-3}$.
The effect of choosing $\theta$ is minimal in practice because we use $D$, which is larger than $D_0$.
Methods with the suffix ``(LS1)'' (\eg, \adcgs:0.5(LS1)) apply the line search (\cref{alg:ls-first-iteration}) at the first iteration.
We stop when $\max_{v \in P}\langle \nabla f(x_k), x_k - v\rangle \leq 10^{-10}$ or when the maximum number of iterations is reached.
Each figure reports both the primal gap and the FW gap, with all y-axes on a logarithmic scale.
When $f(x^*)$ is unknown, we use the lowest objective value attained by any method as a proxy for $f(x^*)$ when computing the primal gap.
We report results versus the number of iterations (FOO calls), wall-clock time, and the number of LMO calls.
For readability, we truncate the y-axis to omit extremely large values.

\subsection{Least Squares Regression over the Simplex}
We first consider the least squares regression problem over the simplex, $\min_{x\in\Delta_n} \frac{1}{2}\|Ax - b\|^2$, where the simplex $\Delta_n := \Set{x\in\R^n_+}{\sum_{i=1}^n x_i \leq 1}$, $A\in\R^{m\times n}$, and $b\in\R^m$.
We generated $A$ with i.i.d.\ entries from the uniform distribution on $[0,1]$, drew a random vector uniformly from $[0,1]^n$ and projected it onto $\Delta_n$ to obtain $x^*$, and defined $b := Ax^*$.
The function $f$ is $L$-smooth with $L = \lambda_{\max}(A^\top A)$.
We compared \adcgs (\cref{alg:adcgs}) with CGS~\citep{Lan2016-op}, CG with line search~\citep{Pedregosa2020-ay}, CG with open-loop stepsizes (using $2/(2 + k)$, \eg,~\citep[Algorithm 4]{Pokutta2024-qu}), the auto-conditioned CG method (AC-CGM)~\citep{Yagishita2025-sx}, and the adaptive CG method (ACG)~\citep{Khademi2025-gt}. 
ACG also employs Lipschitz estimates but still relies on backtracking at every iteration.
\adcgs uses $\tilde{\delta}_k$ as the inner tolerance (see Remark~\ref{remark:choice-delta}).
We set $x_0 = (\frac{1}{n}, \ldots, \frac{1}{n})$ and the maximum number of iterations to 10000.
We consider the cases $(m,n)=(1000,200)$ and $(m,n) = (2500,500)$. We conducted experiments on 20 independently generated random instances (random seeds 1--20). In the figures, the solid line shows the mean over the 20 runs, and the shaded area indicates the mean $\pm$ standard deviation.
The results for $(m,n)=(2500,500)$, including the effect of the first-iteration line search (\cref{alg:ls-first-iteration}), are shown in~\cref{fig:least-squares-simplex-base-500-2500,fig:least-squares-simplex-adcgs-500-2500}.
The remaining results (including those for $(m,n)=(1000,200)$) are provided in Appendix~\ref{app:quadratic_simplex} (see~\cref{fig:least-squares-simplex-base-200-1000,fig:least-squares-simplex-adcgs-200-1000}).
\adcgs:0.5 outperformed the other methods and achieved the best performance, while the other \adcgs variants showed similarly strong performance.
The first-iteration line search yields comparable performance for all variants, except for \adcgs with $\alpha = 1.0$.
\adcgs(LS1) with $\alpha = 1.0$ is omitted because it is unstable. 
\adcgs requires more LMO calls than CGS because of tighter FW-gap tolerances.
\begin{figure*}[!t]
  \centering
  \includegraphics[width=0.96\textwidth]{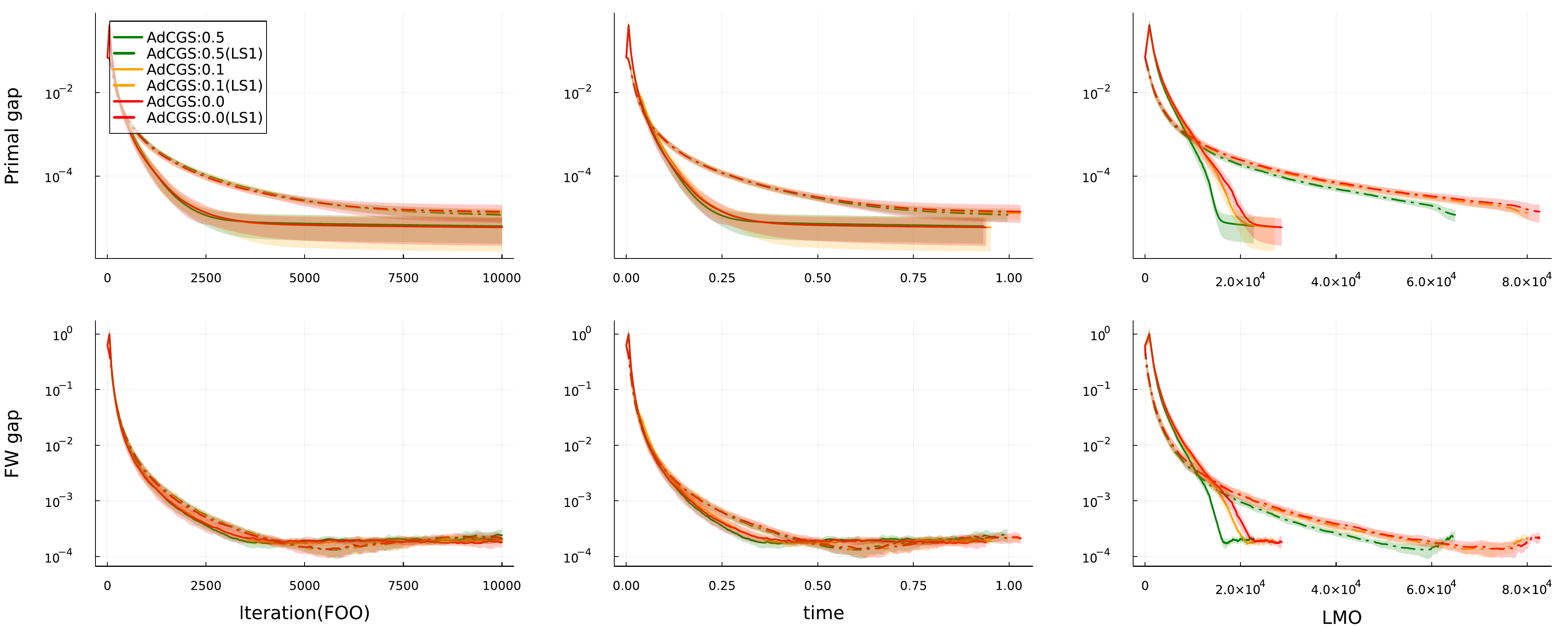}
  \caption{Least squares regression over the simplex with $(m,n) = (2500,500)$ compared with the \adcgs variants.}\label{fig:least-squares-simplex-adcgs-500-2500}
\end{figure*}

\subsection{$\ell_p$ Loss Regression over the $\ell_2$-Ball}
We consider the $\ell_p$ loss regression problem over the $\ell_2$-ball, $\min_{x \in P} \|Ax - b\|_p^p$,
where $A\in\R^{m\times n}$, $b\in\R^m$, $p > 1$, and $P = \Set{x \in \R^n}{\|x\| \leq r}$.
We use the YearPredictionMSD.train ($m = 463715$, $n = 90$) and cpusmall ($m = 8192$, $n = 12$) datasets from LIBSVM~\citep{Chang2011-zh} to form $A$ and $b$.
We standardize the entries of $A$.
We set $p = 1.5$, $r = 0.97\|\tilde{x}\|$ with $\tilde{x} \in \argmin_{x \in \R^n} \|Ax - b\|$, set $x_0$ to the zero vector, and set the maximum number of iterations to 20000.
We compared \adcgs with CGS, AC-CGM, ACG, the projected gradient method (PG), and AC-FGM ($\alpha = 0.5$).
\adcgs uses $\delta_{1:k}$ as the inner tolerance (see Remark~\ref{remark:choice-delta}).
Standard CG methods became ineffective because the CG updates were nearly zero, so we do not report them in this experiment.
We include AC-FGM primarily as a baseline to quantify the performance gap between AC-FGM and \adcgs.
When projections are efficiently computable, AC-FGM typically achieves better performance than \adcgs.
Results for YearPredictionMSD.train are shown in~\cref{fig:year-prediction-msd-pnorm} and results for cpusmall are provided in Appendix~\ref{app:lp_loss} (see~\cref{fig:cpusmall-lp-loss}).
Note that gradient computation is expensive for YearPredictionMSD.train because of its large $m$.
\adcgs:0.0 outperformed existing projection-free methods, PG, and AC-FGM. However, AC-FGM ($\alpha=0.5$) outperformed \adcgs:0.5, likely because AC-FGM computes projections exactly.
For $1 < p < 2$, the objective is not locally smooth, under which CGS and PG fail to reduce the primal gap.

\begin{figure*}[!t]
  \centering
  \includegraphics[width=0.96\textwidth]{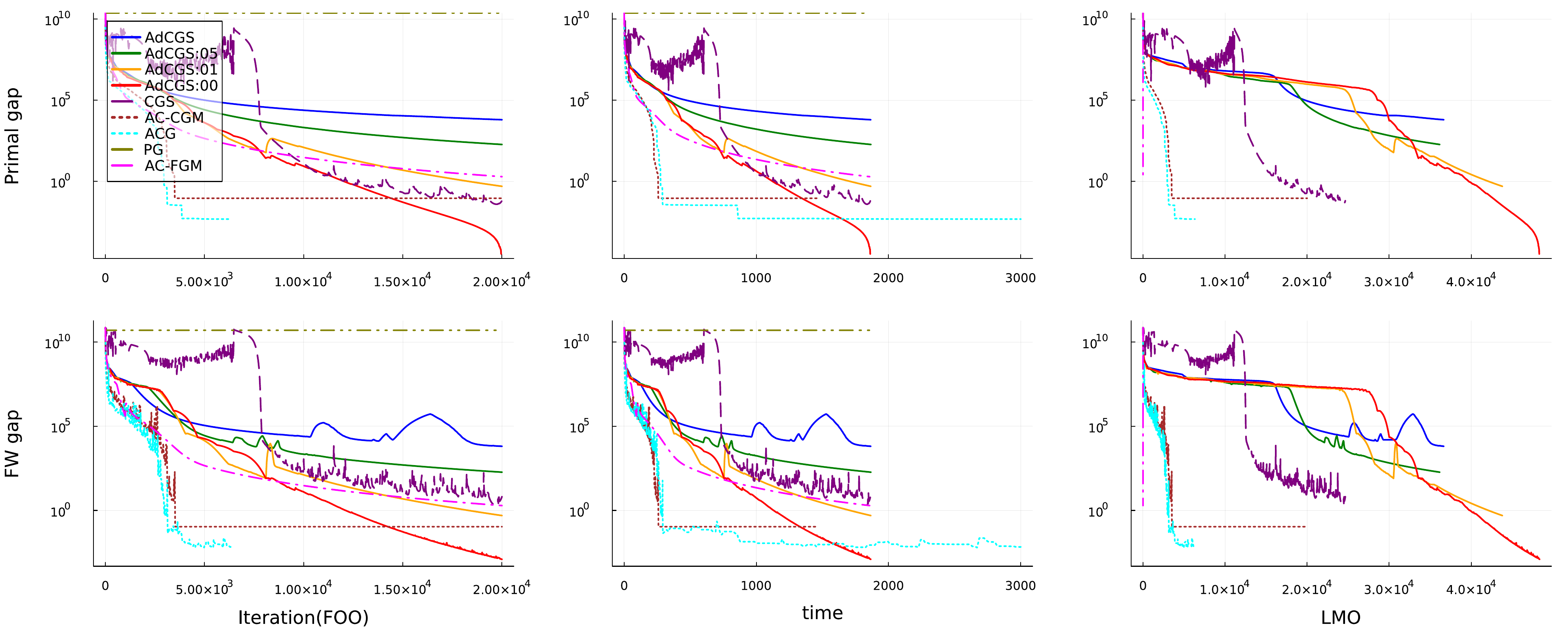}
  \caption{$\ell_p$ loss regression over the YearPredictionMSD.train dataset with $(m,n) = (463715,90)$.}\label{fig:year-prediction-msd-pnorm}
\end{figure*}

\subsection{Logistic Regression}
We consider the logistic regression problem over the $K$-sparse polytope, $\min_{x \in P} \sum_{i=1}^m \log(1 + \exp(-b_i a_i^\top x))$, where $a_i\in\R^n$ and $b_i\in\{-1,1\}$ for $i=1,\ldots,m$ and $P = \Set{x \in \R^n}{\|x\|_1 \leq \kappa K,\|x\|_\infty \leq \kappa}$ with $\kappa > 0$.
We use the a9a ($m = 32561$, $n = 123$, $\kappa = 1$), gisette ($m = 6000$, $n = 5000$, $\kappa = 5$), phishing ($m = 11055$, $n = 68$, $\kappa = 5$), and w8a ($m = 49749$, $n = 300$, $\kappa = 1$) datasets from LIBSVM~\citep{Chang2011-zh}.
We set $K = 0.05n$ and the maximum number of iterations to 20000, and initialized $x_0$ as the zero vector.
We compared \adcgs with CGS, AC-CGM, and ACG.
Since the CG updates were negligible, we omit them.
\adcgs uses $\delta_{1:k}$ as the inner tolerance (see Remark~\ref{remark:choice-delta}).
In this experiment, we set the maximum number of inner iterations for \adcgs to 50.
The results for gisette are shown in~\cref{fig:logistic-regression-gisette}.
Results for the remaining datasets are provided in Appendix~\ref{app:logistic_regression} (see~\cref{fig:logistic-regression-a9a,fig:logistic-regression-w8a,fig:logistic-regression-phishing}).
\adcgs:0.0 outperformed existing methods.

\begin{figure*}[!t]
  \centering
  \includegraphics[width=0.96\textwidth]{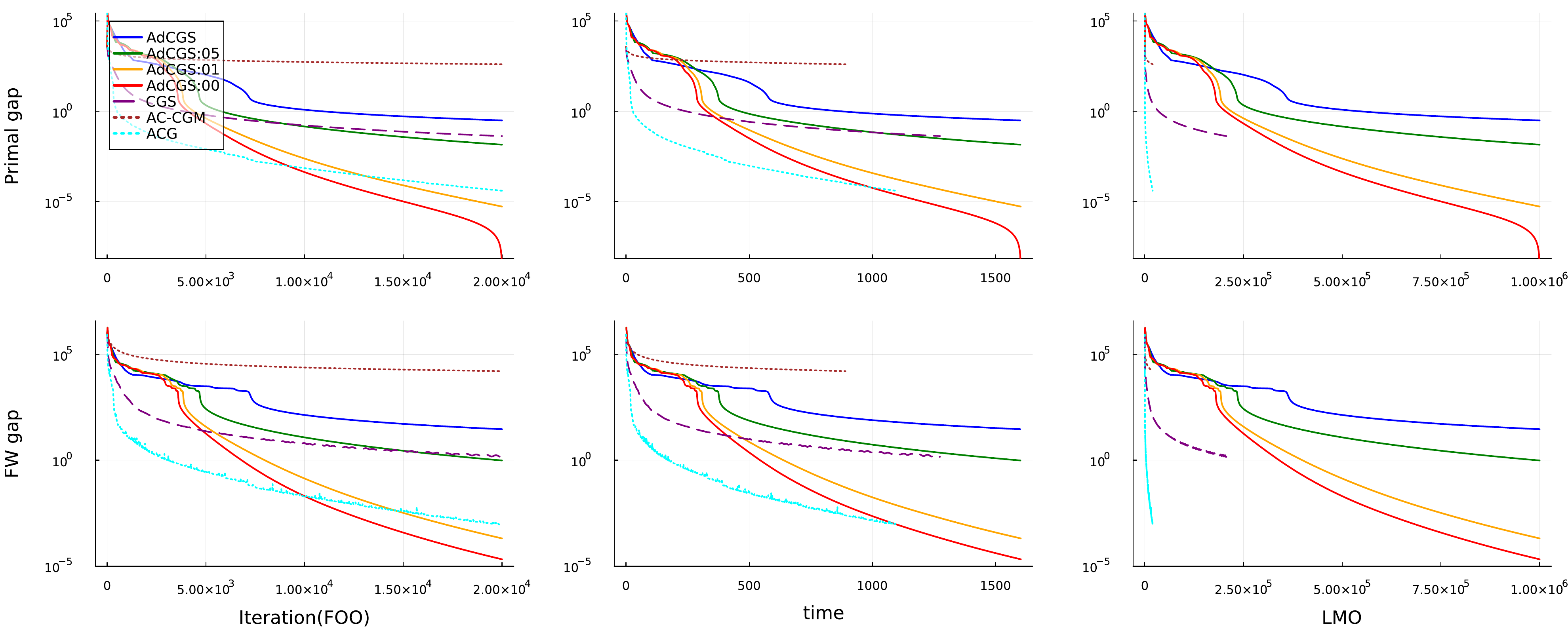}
  \caption{Logistic regression over the gisette dataset with $(m,n) = (6000,5000)$.}\label{fig:logistic-regression-gisette}
\end{figure*}

\section{Conclusion}\label{sec:conclusion}
We proposed \adcgs, an accelerated, projection-free, and line-search-free framework with adaptive stepsizes. 
Our analysis establishes accelerated convergence rates for convex objective functions and linear convergence for locally strongly convex objective functions. 
It also clarifies how adaptive stepsizes interact with inexact LMO-based inner solves, yielding oracle-complexity bounds that trade off gradient and LMO calls.
Experiments show that AdCGS improves over projection-free baselines and is competitive with projection-based methods when projections are inexpensive.

\paragraph{Limitations and future work:} 
Linear convergence generally requires local strong convexity. However, algorithms for estimating the associated local strong convexity parameters remain relatively scarce. Although such parameters could plausibly be estimated in a manner analogous to Lipschitz estimates, a rigorous convergence analysis of these estimation procedures has not yet been established for \adcgs.
Developing such estimation procedures remains an important direction for future work. The present analysis covers only two classes of functions: convex functions and locally strongly convex functions.
Future work will extend the framework to nonconvex settings, including weakly convex objective functions.

\section*{Acknowledgements}
The author is grateful to Akiko Takeda for helpful discussions and constructive feedback.
This project has been funded by the Japan Society for the Promotion of Science (JSPS) and the Nakajima foundation; JSPS KAKENHI Grant Number JP25K21156 and JP23K26336.

\bibliography{main}

@ARTICLE{Garber2025-al,
  title         = "{Accelerated Frank-Wolfe algorithms: Complementarity
                   conditions and sparsity}",
  author        = "Garber, Dan",
  journal       = "arXiv preprint arXiv:2511.02821",
  year          =  2025,
  archivePrefix = "arXiv",
  primaryClass  = "math.OC"
}

@ARTICLE{Lan2016-op,
  title     = "{Conditional gradient sliding for convex optimization}",
  author    = "Lan, Guanghui and Zhou, Yi",
  journal   = "SIAM J. Optim.",
  publisher = "SIAM",
  volume    =  26,
  number    =  2,
  pages     = "1379--1409",
  year      =  2016,
  language  = "en"
}

@INCOLLECTION{Ahn2022-zt,
  title     = "{Understanding Nesterov's acceleration via proximal point method}",
  author    = "Ahn, Kwangjun and Sra, Suvrit",
  booktitle = "{Symposium on Simplicity in Algorithms (SOSA)}",
  publisher = "SIAM",
  address   = "Philadelphia, PA",
  pages     = "117--130",
  year      =  2022
}

@ARTICLE{Li2025-cb,
  title     = "{A simple uniformly optimal method without line search for convex
               optimization}",
  author    = "Li, Tianjiao and Lan, Guanghui",
  journal   = "Math. Program.",
  publisher = "Springer Science and Business Media LLC",
  pages     = "1--38",
  year      =  2025,
}

@BOOK{Nesterov2018-gj,
  title     = "{Lectures on Convex Optimization}",
  author    = "Nesterov, Yurii",
  publisher = "Springer",
  address   = "Cham, Switzerland",
  volume    =  137,
  series    = "Springer Optimization and Its Applications",
  year      =  2018
}

@ARTICLE{Besancon2022-lx,
  title     = "{FrankWolfe.jl: A high-performance and flexible toolbox for
               Frank--Wolfe algorithms and conditional gradients}",
  author    = "Besançon, Mathieu and Carderera, Alejandro and Pokutta, Sebastian",
  journal   = "INFORMS J. Comput.",
  publisher = "Institute for Operations Research and the Management Sciences
               (INFORMS)",
  volume    =  34,
  number    =  5,
  pages     = "2611--2620",
  year      =  2022,
}

@BOOK{Braun2025-sz,
  title     = "{Conditional gradient methods: From core principles to AI
               applications}",
  author    = "Braun, Gábor and Carderera, Alejandro and Combettes, Cyrille W
               and Hassani, Hamed and Karbasi, Amin and Mokhtari, Aryan and
               Pokutta, Sebastian",
  publisher = "SIAM",
  address   = "Philadelphia, PA",
  series    = "MOS-SIAM Series on Optimization",
  year      =  2025,
}

@INPROCEEDINGS{Lacoste-Julien2015-gb,
  title     = "{On the global linear convergence of Frank-Wolfe optimization
               variants}",
  author    = "Lacoste-Julien, Simon and Jaggi, Martin",
  editor    = "Cortes, Corinna and Lee, D D and Sugiyama, Masashi and Garnett,
               Roman",
  booktitle = "{Adv. Neural Inf. Process. Syst.}",
  pages     = "496--504",
  volume    =  28,
  address   = "Montreal, Canada",
  year      =  2015
}

@INPROCEEDINGS{Garber2015-de,
  title     = "{Faster rates for the Frank-Wolfe method over strongly-convex
               sets}",
  author    = "Garber, Dan and Hazan, Elad",
  editor    = "Bach, Francis and Blei, David",
  booktitle = "{Proc. 32nd Int. Conf. Mach. Learn.}",
  publisher = "PMLR",
  address   = "Lille, France",
  volume    =  37,
  pages     = "541--549",
  series    = "PMLR",
  year      =  2015
}

@INPROCEEDINGS{Kerdreux2021-kc,
  title     = "{Projection-free optimization on uniformly convex sets}",
  author    = "Kerdreux, Thomas and d'Aspremont, Alexandre and Pokutta,
               Sebastian",
  editor    = "Banerjee, Arindam and Fukumizu, Kenji",
  booktitle = "{Proc. 24th Int. Conf. Artif. Intell. Stat.}",
  publisher = "PMLR",
  volume    =  130,
  pages     = "19--27",
  series    = "PMLR",
  address   = "San Diego, California, USA",
  year      =  2021
}

@INPROCEEDINGS{Pedregosa2020-ay,
  title     = "{Linearly convergent Frank-Wolfe with backtracking line-search}",
  author    = "Pedregosa, Fabian and Negiar, Geoffrey and Askari, Armin and
               Jaggi, Martin",
  editor    = "Chiappa, Silvia and Calandra, Roberto",
  booktitle = "{Proc. 23rd Int. Conf. Artif. Intell. Stat.}",
  publisher = "PMLR",
  address   = "Palermo, Italy",
  volume    =  108,
  pages     = "1--10",
  series    = "PMLR",
  year      =  2020
}

@ARTICLE{Pokutta2024-qu,
  title   = "{The Frank-Wolfe algorithm: A short introduction}",
  author  = "Pokutta, Sebastian",
  journal = "Jahresber. Dtsch. Math.-Ver.",
  volume  =  126,
  number  =  1,
  pages   = "3--35",
  year    =  2024
}

@ARTICLE{Chang2011-zh,
  title     = "{LIBSVM: A library for support vector machines}",
  author    = "Chang, Chih-Chung and Lin, Chih-Jen",
  journal   = "ACM Trans. Intell. Syst. Technol.",
  publisher = "Association for Computing Machinery (ACM)",
  volume    =  2,
  number    =  3,
  pages     = "1--27",
  year      =  2011,
  language  = "en",
  note      =	 {Software available at \url{http://www.csie.ntu.edu.tw/~cjlin/libsvm}}
}

@ARTICLE{Combettes2021-ct,
  title     = "{Complexity of linear minimization and projection on some sets}",
  author    = "Combettes, Cyrille W and Pokutta, Sebastian",
  journal   = "Oper. Res. Lett.",
  publisher = "Elsevier BV",
  volume    =  49,
  number    =  4,
  pages     = "565--571",
  year      =  2021,
  language  = "en"
}

@ARTICLE{Frank1956-cq,
  title     = "{An algorithm for quadratic programming}",
  author    = "Frank, M and Wolfe, P",
  journal   = "Nav. Res. Logist. Q.",
  publisher = "books.google.com",
  volume    =  3,
  number    = "1-2",
  pages     = "95--110",
  year      =  1956
}

@ARTICLE{Levitin1966-sm,
  title     = "{Constrained minimization methods}",
  author    = "Levitin, E S and Polyak, B T",
  journal   = "U.S.S.R. Comput. Math. Math. Phys.",
  publisher = "Elsevier BV",
  volume    =  6,
  number    =  5,
  pages     = "1--50",
  year      =  1966,
  language  = "en"
}

@INPROCEEDINGS{Jaggi2013-oz,
  title     = "{Revisiting {Frank-Wolfe}: Projection-free sparse convex
               optimization}",
  author    = "Jaggi, Martin",
  editor    = "Dasgupta, Sanjoy and McAllester, David",
  booktitle = "{Proc. 30th Int. Conf. Mach. Learn.}",
  publisher = "PMLR",
  address   = "Atlanta, Georgia, USA",
  volume    =  28,
  pages     = "427--435",
  series    = "PMLR",
  year      =  2013
}

@INPROCEEDINGS{Malitsky2020-vs,
  title     = "{Adaptive gradient descent without descent}",
  author    = "Malitsky, Yura and Mishchenko, Konstantin",
  editor    = "Daum\'{e}, Hal and Singh, Aarti",
  booktitle = "{Proc. 37th Int. Conf. Mach. Learn.}",
  publisher = "PMLR",
  volume    =  119,
  pages     = "6702--6712",
  year      =  2020
}

@ARTICLE{Malitsky2019-dp,
  title     = "{Golden ratio algorithms for variational inequalities}",
  author    = "Malitsky, Yura",
  journal   = "Math. Program.",
  publisher = "Springer Science and Business Media LLC",
  volume    =  184,
  pages     = "383--410",
  year      =  2019,
  language  = "en"
}

@ARTICLE{Nesterov1983-xn,
  title   = "{A method of solving a convex programming problem with convergence
             rate $O\bigl(\frac{1}{k^2}\bigr)$}",
  author  = "Nesterov, Yu",
  journal = "Dokl. Akad. Nauk SSSR",
  volume  =  269,
  number  =  3,
  pages   = "543--547",
  year    =  1983
}

@ARTICLE{Auslender2006-cy,
  title     = "{Interior gradient and proximal methods for convex and conic
               optimization}",
  author    = "Auslender, Alfred and Teboulle, Marc",
  journal   = "SIAM J. Optim.",
  publisher = "SIAM",
  volume    =  16,
  number    =  3,
  pages     = "697--725",
  year      =  2006
}

@ARTICLE{Tseng2008-fz,
  title   = "{On accelerated proximal gradient methods for convex-concave
             optimization}",
  author  = "Tseng, Paul",
  journal = "submitted to SIAM J. Optim.",
  year    =  2008
}

@ARTICLE{Gasnikov2018-by,
  title     = "{Universal method for stochastic composite optimization problems}",
  author    = "Gasnikov, A V and Nesterov, Yu",
  journal   = "Computational Mathematics and Mathematical Physics",
  publisher = "Springer",
  volume    =  58,
  number    =  1,
  pages     = "48--64",
  year      =  2018,
  language  = "en"
}

@INPROCEEDINGS{Malitsky2024-ct,
  title     = "{Adaptive proximal gradient method for convex optimization}",
  author    = "Malitsky, Yura and Mishchenko, Konstantin",
  editor    = {A. Globerson and L. Mackey and D. Belgrave and A. Fan and U. Paquet and J. Tomczak and C. Zhang},
  booktitle = "{Proc. 38th Int. Conf. Neural Inf. Process. Syst.}",
  volume    = "37",
  address   = "Vancouver, Canada",
  pages     = "100670--100697",
  year      =  2024
}

@ARTICLE{Latafat2025-iu,
  title     = "{Adaptive proximal algorithms for convex optimization under local
               Lipschitz continuity of the gradient}",
  author    = "Latafat, Puya and Themelis, Andreas and Stella, Lorenzo and
               Patrinos, Panagiotis",
  journal   = "Math. Program.",
  publisher = "Springer Science and Business Media LLC",
  volume    =  213,
  number    = "1-2",
  pages     = "433--471",
  year      =  2025
}

@ARTICLE{Suh2025-eq,
  title         = "{An adaptive and parameter-free Nesterov's accelerated
                   gradient method for convex optimization}",
  author        = "Suh, Jaewook J and Ma, Shiqian",
  journal       = "arXiv preprint arXiv:2505.11670",
  year          =  2025,
  archivePrefix = "arXiv",
  primaryClass  = "math.OC"
}

@ARTICLE{Borodich2025-jq,
  title         = "{Nesterov finds GRAAL: Optimal and adaptive gradient method
                   for convex optimization}",
  author        = "Borodich, Ekaterina and Kovalev, Dmitry",
  journal       = "arXiv preprint arXiv:2507.09823",
  year          =  2025,
  archivePrefix = "arXiv",
  primaryClass  = "math.OC"
}

@ARTICLE{GueLat1986-bg,
  title     = "{Some comments on Wolfe's `away step'}",
  author    = "GuéLat, Jacques and Marcotte, Patrice",
  journal   = "Math. Program.",
  publisher = "Springer Science and Business Media LLC",
  volume    =  35,
  number    =  1,
  pages     = "110--119",
  year      =  1986,
  language  = "en"
}

@ARTICLE{Canon1968-sr,
  title     = "{A tight upper bound on the rate of convergence of Frank-Wolfe
               algorithm}",
  author    = "Canon, M D and Cullum, C D",
  journal   = "SIAM J. Control Optim.",
  publisher = "SIAM",
  volume    =  6,
  number    =  4,
  pages     = "509--516",
  year      =  1968
}

@INPROCEEDINGS{Takahashi2026-py,
  title         = "{Fast Frank--Wolfe algorithms with adaptive Bregman step-size
                   for weakly convex functions}",
  author        = "Takahashi, Shota and Pokutta, Sebastian and Takeda, Akiko",
  booktitle    = "Proc. 14th Int. Conf. Learn. Represent.",
  year          =  2026,
  pages = "1--38",
  address   = "Rio de Janeiro, Brazil",
  archivePrefix = "arXiv",
  primaryClass  = "math.OC"
}

@INPROCEEDINGS{Kerdreux2019-ec,
  title     = "{Restarting Frank-Wolfe}",
  author    = "Kerdreux, Thomas and d'Aspremont, Alexandre and Pokutta,
               Sebastian",
  editor    = "Chaudhuri, Kamalika and Sugiyama, Masashi",
  booktitle = "{Proc. 22nd Int. Conf. Artif. Intell. Stat.}",
  publisher = "PMLR",
  volume    =  89,
  address   = "Naha, Okinawa, Japan",
  pages     = "1275--1283",
  series    = "PMLR",
  year      =  2019
}

@ARTICLE{Kerdreux2021-fv,
  title         = "{Local and global uniform convexity conditions}",
  author        = "Kerdreux, Thomas and d'Aspremont, Alexandre and Pokutta,
                   Sebastian",
  journal       = "arXiv preprint arXiv:2102.05134",
  year          =  2021,
  archivePrefix = "arXiv",
  primaryClass  = "math.OC"
}

@ARTICLE{Kerdreux2022-tv,
  title     = "{Restarting Frank--Wolfe: Faster rates under Hölderian error
               bounds}",
  author    = "Kerdreux, Thomas and d'Aspremont, Alexandre and Pokutta,
               Sebastian",
  journal   = "J. Optim. Theory Appl.",
  publisher = "Springer",
  volume    =  192,
  number    =  3,
  pages     = "799--829",
  year      =  2022
}

@ARTICLE{Lan2013-dv,
  title         = "{The complexity of large-scale convex programming under a
                   linear optimization oracle}",
  author        = "Lan, Guanghui",
  journal       = "arXiv preprint arXiv:1309.5550",
  year          =  2013,
  archivePrefix = "arXiv",
  primaryClass  = "math.OC"
}

@ARTICLE{Khademi2025-gt,
  title         = "{Adaptive conditional gradient descent}",
  author        = "Khademi, Abbas and Silveti-Falls, Antonio",
  journal       = "arXiv preprint arXiv:2510.11440",
  year          =  2025,
  archivePrefix = "arXiv",
  primaryClass  = "math.OC"
}

@ARTICLE{Yagishita2025-sx,
  title         = "{Simple linesearch-free first-order methods for nonconvex
                   optimization}",
  author        = "Yagishita, Shotaro and Ito, Masaru",
  journal       = "arXiv preprint arXiv:2509.14670",
  year          =  2025,
  archivePrefix = "arXiv",
  primaryClass  = "math.OC"
}
\bibliographystyle{abbrvnat}

\appendix
\section{Proofs of Theoretical Results}\label{app:proof-of-theoretical-results}
\subsection{Proof of~\cref{theorem:cg-complexity}}\label{app:proof-cg-complexity}
The following lemma will be useful in the analysis.
\begin{lemma}[{\citep[Lemma 2.1]{Lan2016-op}}]
  Let $\lambda_t \in (0,1]$, $t = 1,2,\ldots$ and let 
  \begin{align*}
    \Lambda_t = \begin{cases}
      1, &\text{if } t = 1,\\
      (1 - \lambda_t)\Lambda_{t-1}, &\text{if } t \geq 2.
    \end{cases}
  \end{align*}
  Suppose that $\Lambda_t > 0$ for all $t \geq 2$ and the sequence $\{\zeta_t\}$ satisfies, for any $t \geq 1$ and a sequence $\{\nu_t\}$,
  \begin{align*}
    \zeta_t \leq (1 - \lambda_t)\zeta_{t-1} + \nu_t.
  \end{align*}
  Then, for any $j \in \{1, \ldots, k\}$, it holds that
  \begin{align}
    \zeta_k \leq \Lambda_k\left(\frac{1-\lambda_j}{\Lambda_j}\zeta_{j-1} + \sum_{i=j}^k\frac{\nu_i}{\Lambda_i}\right). \label{ineq:lambda-bound}
  \end{align}
\end{lemma}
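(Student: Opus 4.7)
The plan is to convert the one-step recursion $\zeta_t \leq (1-\lambda_t)\zeta_{t-1} + \nu_t$ into a telescoping inequality by dividing through by $\Lambda_t$. The key algebraic observation is the identity $\frac{1-\lambda_t}{\Lambda_t} = \frac{1}{\Lambda_{t-1}}$ for $t \geq 2$, which follows immediately from the definition $\Lambda_t = (1-\lambda_t)\Lambda_{t-1}$. This identity is what converts the weighted division into a clean first-difference form.

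First I would divide the hypothesized inequality by $\Lambda_t > 0$ (legitimate by the positivity assumption) and use the identity above to obtain, for $t \geq 2$, the telescoping form $\frac{\zeta_t}{\Lambda_t} \leq \frac{\zeta_{t-1}}{\Lambda_{t-1}} + \frac{\nu_t}{\Lambda_t}$. Summing this from $t = j+1$ to $t = k$ collapses the left-hand side to $\frac{\zeta_k}{\Lambda_k} - \frac{\zeta_j}{\Lambda_j}$, which gives $\frac{\zeta_k}{\Lambda_k} \leq \frac{\zeta_j}{\Lambda_j} + \sum_{t=j+1}^{k}\frac{\nu_t}{\Lambda_t}$, valid for any $j \in \{1, \ldots, k\}$ (with the convention that the empty sum vanishes when $j = k$).

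Next I would control the starting term $\frac{\zeta_j}{\Lambda_j}$ by applying the hypothesized recursion once at step $t = j$ and dividing by $\Lambda_j$, yielding $\frac{\zeta_j}{\Lambda_j} \leq \frac{1-\lambda_j}{\Lambda_j}\zeta_{j-1} + \frac{\nu_j}{\Lambda_j}$. Substituting this into the telescoped inequality absorbs the isolated $\frac{\nu_j}{\Lambda_j}$ term into the summation, shifting its lower limit from $j+1$ down to $j$, and multiplying through by $\Lambda_k$ produces~\eqref{ineq:lambda-bound}. The boundary case $j = 1$ is handled uniformly under this scheme, since $\Lambda_1 = 1$ makes all divisions well-defined and the coefficient of $\zeta_0$ naturally reduces to $1 - \lambda_1$.

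There is no real analytic obstacle — the whole argument is an identity-driven telescoping — and the only point needing care is the indexing, namely verifying that applying the recursion once more at $t = j$ merges exactly one extra $\frac{\nu_j}{\Lambda_j}$ term into the sum to recover the stated lower summation limit.
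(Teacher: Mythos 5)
Your proof is correct. The paper does not reprove this lemma---it is imported verbatim as Lemma~2.1 of \citet{Lan2016-op}---and your argument (dividing by $\Lambda_t$, using $\tfrac{1-\lambda_t}{\Lambda_t}=\tfrac{1}{\Lambda_{t-1}}$ to telescope from $t=j+1$ to $k$, then applying the recursion once more at $t=j$ to produce the $\tfrac{1-\lambda_j}{\Lambda_j}\zeta_{j-1}$ term) is exactly the standard derivation given in that source, with the indexing handled correctly.
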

The proof follows the same argument as that of~\citep[Theorem 2.2(c)]{Lan2016-op}. We include it here for completeness and to make the presentation self-contained.
Recall the definition of the actual update $u_{t+1}:=(1-\gamma_t)u_t+\gamma_t v_t$, where $\gamma_t = \min\left\{1, \frac{\langle \nabla f(x_{k-1})+ (u_t - u)/\eta_k, u_t - v_t\rangle}{\|v_t - u_t\|^2/\eta_k}\right\}$ is the minimizer of 
\begin{align*}
  \min_{\gamma\in[0,1]}\quad \langle \nabla f(x_{k-1}), (1-\gamma)u_t+\gamma v_t\rangle + \frac{1}{2\eta_k}\|(1-\gamma)u_t+\gamma v_t-y_{k-1}\|^2.
\end{align*}
See the \cg{} subroutine in~\cref{alg:adcgs} for details.
\begin{proof}[Proof of~\cref{theorem:cg-complexity}]
  Let us define $\lambda_t := \frac{2}{t}$ and $\Lambda_t := \frac{2}{t(t-1)}$ for $t \geq 2$. Obviously, $\Lambda_{t+1} := (1 - \lambda_{t+1})\Lambda_t$ holds for all $t \geq 2$. We also define
  \begin{align*}
    \psi_k(z) = \langle \nabla f(x_{k-1}), z\rangle + \frac{1}{2\eta_k}\|z-y_{k-1}\|^2.
  \end{align*}
  Let $\overline{u}_{t+1} = (1 - \lambda_{t+1})u_t + \lambda_{t+1}v_t$. 
  We use $\gamma_t = \min\left\{1, \frac{\langle \nabla f(x_{k-1})+ (u_t - u)/\eta_k, u_t - v_t\rangle}{\|v_t - u_t\|^2/\eta_k}\right\}$, which is the minimizer of $\min_{\gamma\in[0,1]} \psi_k((1-\gamma)u_t+\gamma v_t)$ along the segment between $u_t$ and $v_t$, \ie, $\psi_k(u_{t+1}) \leq \psi_k(\overline{u}_{t+1})$ holds for $u_{t+1}:=(1-\gamma_t)u_t+\gamma_t v_t$.
  Using the fact that $\psi_k(z)$ has a Lipschitz continuous gradient with Lipschitz constant $1/\eta_k$, we have
  \begin{align}
    \psi_k(u_{t+1}) &\leq \psi_k(\overline{u}_{t+1})\nonumber\\
    &\leq \psi_k(u_t) + \langle\nabla \psi_k(u_t), \overline{u}_{t+1} - u_t\rangle + \frac{1}{2\eta_k}\|\overline{u}_{t+1} - u_t\|^2\nonumber\\
    &\eqby{(a)} (1 - \lambda_{t+1})\psi_k(u_t)
    + \lambda_{t+1}(\psi_k(u_t) + \langle\nabla \psi_k(u_t), v_t - u_t\rangle) + \frac{\lambda_{t+1}^2}{2\eta_k}\|v_t - u_t\|^2,\label{ineq:cg-step}
  \end{align}
  where (a) follows from $\overline{u}_{t+1} - u_t = \lambda_{t+1}(v_t - u_t)$.
  By the definition of $v_t$ and the convexity of $\psi_k$, we have $\psi_k(u_t) + \langle\nabla \psi_k(u_t), v_t - u_t\rangle \leq \psi_k(u_t) + \langle\nabla \psi_k(u_t), z - u_t\rangle \leq \psi_k(z)$ for all $z \in P$.
  Therefore, from~\eqref{ineq:cg-step}, we obtain, for all $z \in P$,
  \begin{align}
    \psi_k(u_{t+1}) - \psi_k(z) &\leq (1 - \lambda_{t+1})(\psi_k(u_t) - \psi_k(z)) + \frac{\lambda_{t+1}^2}{2\eta_k}\|v_t - u_t\|^2. \label{ineq:cg-iter}
  \end{align}
  Using~\eqref{ineq:lambda-bound} with $j=2$ and~\eqref{ineq:cg-iter}, we have, for all $z \in P$,
  \begin{align}
    \psi_k(u_{t+1}) - \psi_k(z) \leq \Lambda_{t+1}(1 - \lambda_2)(\psi_k(u_1) - \psi_k(z)) + \frac{\Lambda_{t+1}}{\eta_k}\sum_{i=1}^t \frac{\lambda_{i+1}^2}{2\Lambda_{i+1}}\|v_i - u_i\|^2\leqby{(b)} \frac{2D^2}{\eta_k(t+1)}, \label{ineq:cg-rate}
  \end{align}
  where (b) follows from $\lambda_2 = 1$, $\frac{\lambda_{i+1}^2}{2\Lambda_{i+1}} = \frac{i}{i+1} \leq 1$, and $\|v_i - u_i\| \leq D$ for all $i \geq 1$.
  On the other hand, the FW gap of $\psi_k$ at $u$ is defined as
  \begin{align*}
    \G_k(u) := \max_{v \in P}\langle\nabla \psi_k(u), u - v\rangle.
  \end{align*}
  Using~\eqref{ineq:cg-step}, we have for any $i \geq 1$,
  \begin{align*}
    \lambda_{i+1}\G_k(u_i) &\leq \psi_k(u_i) - \psi_k(u_{i+1}) + \frac{\lambda_{i+1}^2}{2\eta_k}\|v_i - u_i\|^2\leq \xi_i - \xi_{i+1} + \frac{\lambda_{i+1}^2}{2\eta_k}\|v_i - u_i\|^2,
  \end{align*}
  where $\xi_i := \psi_k(u_i) - \psi_k^*$ for all $i \geq 1$ and $\psi_k^* := \min_{z \in P} \psi_k(z)$.
  Dividing both sides by $\Lambda_{i+1}$ and summing from $i=1$ to $t$, we have
  \begin{align*}
    \sum_{i=1}^t\frac{\lambda_{i+1}}{\Lambda_{i+1}}\G_k(u_i) &\leq -\frac{1}{\Lambda_{t+1}}\xi_{t+1} + \sum_{i=2}^t\left(\frac{1}{\Lambda_{i+1}} - \frac{1}{\Lambda_i}\right)\xi_i + \xi_1 + \sum_{i=1}^t \frac{\lambda_{i+1}^2}{2\eta_k\Lambda_{i+1}}\|v_i - u_i\|^2\\
    &\leq\sum_{i=2}^t\left(\frac{1}{\Lambda_{i+1}} - \frac{1}{\Lambda_i}\right)\xi_i  + \xi_1 + \sum_{i=1}^t \frac{\lambda_{i+1}^2}{2\eta_k\Lambda_{i+1}}D^2\\
    &\leqby{(c)} \sum_{i=1}^t i\xi_i + \frac{t}{\eta_k}D^2,
  \end{align*}
  where (c) follows from the definitions of $\lambda_i$ and $\Lambda_i$.
  Using the above inequality, we have, for any $t \geq 1$,
  \begin{align*}
    \min_{1 \leq i \leq t}\G_k(u_i) &\leq \frac{\sum_{i=1}^t\frac{\lambda_{i+1}}{\Lambda_{i+1}}\G_k(u_i)}{\sum_{i=1}^t\frac{\lambda_{i+1}}{\Lambda_{i+1}}}\leq \frac{\sum_{i=1}^t i\xi_i + \frac{t}{\eta_k}D^2}{\sum_{i=1}^t i}\leqby{(d)} \frac{2t + t}{(t(t+1))/2}\frac{D^2}{\eta_k}\leq \frac{6}{\eta_k(t+1)}D^2,
  \end{align*}
  where (d) follows from~\eqref{ineq:cg-rate}.
  This implies the number of CG iterations required to ensure $\G_k(u_t) \leq \delta_k$ is at most $T_k$.
\end{proof}

\subsection{Proof of~\cref{proposition:convex-obj-ineq}}\label{app:proof-convex-obj-ineq}
\begin{proof}[Proof of~\cref{proposition:convex-obj-ineq}]
Since $\G_k(z_k)\leq \delta_k$, for any $z\in P$ we have
\begin{align}
    \langle \eta_k \nabla f(x_{k-1}) + (z_k-y_{k-1}),\, z-z_k\rangle \geq -\eta_k\delta_k.
    \label{ineq:opt-cond-z}
\end{align}
Similarly, taking $z\gets z_k$ yields
\begin{align}
    \langle \eta_{k-1}\nabla f(x_{k-2}) + (z_{k-1}-y_{k-2}),\, z_k-z_{k-1}\rangle \geq -\eta_{k-1}\delta_{k-1}.
    \label{ineq:opt-cond-z-old}
\end{align}
By the update $y_k$, \ie, $y_k = (1-\beta_k)y_{k-1} + \beta_k z_k$, we have $z_{k-1}-y_{k-2}=\frac{1}{1-\beta_{k-1}}(z_{k-1}-y_{k-1})$, and hence~\eqref{ineq:opt-cond-z-old} can be rewritten as
\begin{align}
    \left\langle \eta_k \nabla f(x_{k-2}) + \frac{\eta_k}{\eta_{k-1}(1-\beta_{k-1})}(z_{k-1}-y_{k-1}),\, z_k-z_{k-1}\right\rangle \geq -\eta_k\delta_{k-1}.
    \label{ineq:opt-cond-z-old-y}
\end{align}
Adding~\eqref{ineq:opt-cond-z} and~\eqref{ineq:opt-cond-z-old-y}, we obtain
\begin{align*}
    &\langle \eta_k \nabla f(x_{k-1}),\, z-z_{k-1}\rangle
    + \eta_k \langle \nabla f(x_{k-1})-\nabla f(x_{k-2}),\, z_{k-1}-z_k\rangle\\
    &\quad + \langle z_k-y_{k-1},\, z-z_k\rangle
    + \frac{\eta_k}{\eta_{k-1}(1-\beta_{k-1})}\langle z_{k-1}-y_{k-1},\, z_k-z_{k-1}\rangle
    \geq -\eta_k(\delta_k+\delta_{k-1}).
\end{align*}
Moreover, applying the identity $2\langle x-y,\, z-x\rangle = \|y-z\|^2-\|x-y\|^2-\|x-z\|^2$ to both terms in the second line on the left-hand side, we obtain
\begin{align}
  &\langle \eta_k \nabla f(x_{k-1}),\, z-z_{k-1}\rangle
  + \eta_k \langle \nabla f(x_{k-1})-\nabla f(x_{k-2}),\, z_{k-1}-z_k\rangle\nonumber\\
  &\quad + \frac{1}{2}\|y_{k-1}-z\|^2 - \frac{1}{2}\|z_k-y_{k-1}\|^2 - \frac{1}{2}\|z_k-z\|^2\nonumber\\
  &\quad + \frac{\eta_k}{2\eta_{k-1}(1-\beta_{k-1})}
  \bigl(\|z_k-y_{k-1}\|^2-\|z_{k-1}-y_{k-1}\|^2-\|z_k-z_{k-1}\|^2\bigr)
  \geq -\eta_k(\delta_k+\delta_{k-1}).
  \label{ineq:opt-cond-z-sum}
\end{align}
In addition, we have
\begin{align}
    \|z_k-z\|^2
    &= \left\|\frac{1}{\beta_k}(y_k-z)-\frac{1-\beta_k}{\beta_k}(y_{k-1}-z)\right\|^2 \nonumber\\
    &\eqby{(a)} \frac{1}{\beta_k}\|y_k-z\|^2 - \frac{1-\beta_k}{\beta_k}\|y_{k-1}-z\|^2
      + \frac{1}{\beta_k}\frac{1-\beta_k}{\beta_k}\|y_k-y_{k-1}\|^2 \nonumber\\
    &\eqby{(b)}
      \frac{1}{\beta_k}\|y_k-z\|^2 - \frac{1-\beta_k}{\beta_k}\|y_{k-1}-z\|^2
      + (1-\beta_k)\|z_k-y_{k-1}\|^2,
    \label{eq:zk-z}
\end{align}
where (a) follows from $\|\alpha a+(1-\alpha)b\|^2 = \alpha\|a\|^2 + (1-\alpha)\|b\|^2 - \alpha(1-\alpha)\|a-b\|^2$ for all $\alpha\in\mathbb{R}$ and (b) follows from the update $y_k = (1-\beta_k)y_{k-1} + \beta_k z_k$.
Substituting~\eqref{eq:zk-z} into~\eqref{ineq:opt-cond-z-sum} and rearranging, we obtain
\begin{align}
    &\langle \eta_k \nabla f(x_{k-1}),\, z_{k-1}-z\rangle
    - \eta_k(\delta_k+\delta_{k-1})
    + \frac{1}{2\beta_k}\|y_k-z\|^2\nonumber\\
    &\quad+ \frac{\eta_k}{2\eta_{k-1}(1-\beta_{k-1})}\bigl(\|z_{k-1}-y_{k-1}\|^2+\|z_k-z_{k-1}\|^2\bigr) \nonumber\\
    &\quad \leq \frac{1}{2\beta_k}\|y_{k-1}-z\|^2
    + \eta_k\langle \nabla f(x_{k-1})-\nabla f(x_{k-2}),\, z_{k-1}-z_k\rangle \nonumber\\
    &\quad -\left(\frac{1}{2}+\frac{1-\beta_k}{2}-\frac{\eta_k}{2\eta_{k-1}(1-\beta_{k-1})}\right)\|z_k-y_{k-1}\|^2.
    \label{ineq:opt-cond-z-sum-sub}
\end{align}
For $k\geq 3$, since $\|z_{k-1}-y_{k-1}\|^2+\|z_k-z_{k-1}\|^2 \geq \frac{1}{2}\|z_k-y_{k-1}\|^2$, we can further simplify~\eqref{ineq:opt-cond-z-sum-sub} to obtain
\begin{align}
    &\langle \eta_k \nabla f(x_{k-1}),\, z_{k-1}-z\rangle
    - \eta_k(\delta_k+\delta_{k-1})
    + \frac{1}{2\beta_k}\|y_k-z\|^2 \nonumber\\
    &\quad \leq \frac{1}{2\beta_k}\|y_{k-1}-z\|^2
    + \eta_k\langle \nabla f(x_{k-1})-\nabla f(x_{k-2}),\, z_{k-1}-z_k\rangle \nonumber\\
    &\quad -\left(\frac{1}{2}+\frac{1-\beta_k}{2}-\frac{\eta_k}{4\eta_{k-1}(1-\beta_{k-1})}\right)\|z_k-y_{k-1}\|^2 \nonumber\\
    &\quad \leqby{(c)} \frac{1}{2\beta_k}\|y_{k-1}-z\|^2
    + \eta_k\langle \nabla f(x_{k-1})-\nabla f(x_{k-2}),\, z_{k-1}-z_k\rangle
    - \frac{1}{2}\|z_k-y_{k-1}\|^2,
    \label{ineq:opt-cond-z-bound}
\end{align}
where (c) follows from $\eta_k \leq 2(1-\beta)^2\eta_{k-1}$ in~\eqref{ineq:eta-k3}.
Moreover,
\begin{align}
  &\tau_{k-1}f(x_{k-1}) + \langle \nabla f(x_{k-1}),\, x_{k-1}-z\rangle\nonumber\\
  &\eqby{(d)} \tau_{k-1}\bigl(f(x_{k-1}) + \langle \nabla f(x_{k-1}),\, x_{k-1}-x_{k-1}\rangle\bigr)
  + \langle \nabla f(x_{k-1}),\, z_{k-1}-z\rangle \nonumber\\
  &\eqby{\eqref{def:adaptive-step-size}}
  \tau_{k-1}\left(f(x_{k-2})-\frac{\|\nabla f(x_{k-1})-\nabla f(x_{k-2})\|^2}{2L_{k-1}}\right)
  + \langle \nabla f(x_{k-1}),\, z_{k-1}-z\rangle,
  \label{eq:from-adaptive-step-size}
\end{align}
where (d) follows from $x_{k-1}-z = z_{k-1}-z + \tau_{k-1}(x_{k-2}-x_{k-1})$ from the update $x_k$.
Combining~\eqref{ineq:opt-cond-z-bound} and~\eqref{eq:from-adaptive-step-size}, we obtain
\begin{align}
  &\eta_k\bigl(\tau_{k-1}f(x_{k-1}) + \langle \nabla f(x_{k-1}),\, x_{k-1}-z\rangle - \tau_{k-1}f(x_{k-2}) - (\delta_k+\delta_{k-1})\bigr) \nonumber\\
  &\quad \leq \frac{1}{2\beta_k}\|y_{k-1}-z\|^2 - \frac{1}{2\beta_k}\|y_k-z\|^2
  + \eta_k\langle \nabla f(x_{k-1})-\nabla f(x_{k-2}),\, z_{k-1}-z_k\rangle \nonumber\\
  &\qquad - \frac{1}{2}\|z_k-y_{k-1}\|^2 - \frac{\eta_k\tau_{k-1}}{2L_{k-1}}\|\nabla f(x_{k-1})-\nabla f(x_{k-2})\|^2.
  \label{ineq:opt-cond-adaptive-step-size}
\end{align}
Considering~\eqref{ineq:opt-cond-z-sum-sub} for the case $k=2$ and using $\beta_1=0$, $\tau_1=0$, and $x_1=z_1$, we obtain
\begin{align}
  &\langle \eta_2 \nabla f(x_1),\, z_1-z\rangle
  - \eta_2(\delta_2+\delta_1)
  + \frac{\eta_2}{2\eta_1}\bigl(\|z_1-y_1\|^2+\|z_2-z_1\|^2\bigr) \nonumber\\
  &\quad \leq \frac{1}{2\beta_2}\|y_1-z\|^2 - \frac{1}{2\beta_2}\|y_2-z\|^2
  + \eta_2\langle \nabla f(x_1)-\nabla f(x_0),\, z_1-z_2\rangle\nonumber\\
  &\qquad - \left(\frac{1}{2}+\frac{1-\beta_2}{2}-\frac{\eta_2}{2\eta_1}\right)\|z_2-y_1\|^2 \nonumber\\
  &\quad \leqby{(e)} \frac{1}{2\beta_2}\|y_1-z\|^2 - \frac{1}{2\beta_2}\|y_2-z\|^2
  + \eta_2\langle \nabla f(x_1)-\nabla f(x_0),\, z_1-z_2\rangle
  - \frac{1}{2}\|z_2-y_1\|^2,
  \label{ineq:opt-cond-z-sum-sub-k2}
\end{align}
where (e) holds since $\eta_2\leq (1-\beta)\eta_1$ by~\eqref{ineq:eta-k2}.
Replacing the index by $i$ and summing~\eqref{ineq:opt-cond-z-sum-sub-k2} with~\eqref{ineq:opt-cond-adaptive-step-size} for $i=3,\ldots,k+1$, and using $\tau_1=0$ and $\beta_i=\beta$ for $i\ge2$, we obtain
\begin{align*}
  &\sum_{i=1}^k \eta_{i+1}\bigl(\tau_i f(x_i) + \langle \nabla f(x_i),\, x_i-z\rangle - \tau_i f(x_{i-1}) - (\delta_{i+1}+\delta_i)\bigr)\\
  &\quad + \frac{\eta_2}{2\eta_1}\bigl(\|z_1-y_1\|^2+\|z_2-z_1\|^2\bigr)\\
  &\quad \leq \frac{1}{2\beta}\|y_1-z\|^2 - \frac{1}{2\beta}\|y_{k+1}-z\|^2 + \sum_{i=2}^{k+1}\D_i,
\end{align*}
where, for any $i \geq 2$, $\D_i := \eta_i \langle\nabla f(x_{i-1}) - \nabla f(x_{i-2}), z_{i-1} - z_i \rangle - \frac{\eta_i\tau_{i-1}}{2L_{i-1}}\|\nabla f(x_{i-1}) - \nabla f(x_{i-2})\|^2-\frac{1}{2}\|z_i - y_{i-1}\|^2$.
We have the desired result by rearranging it.
\end{proof}

\subsection{Proof of~\cref{theorem:convex-obj-decrease}}\label{app:proof-convex-obj-decrease}
The proof follows the same argument as that of~\citep[Theorem~1]{Li2025-cb}. We include it here for completeness and to make the presentation self-contained.
\begin{proof}[Proof of~\cref{theorem:convex-obj-decrease}]
We bound $\D_i$ for $i \geq 3$ and obtain
\begin{align*}
  \D_i &\leqby{(a)} \eta_i\|\nabla f(x_{i-1})-\nabla f(x_{i-2})\|\|z_{i-1}-z_i\| - \frac{\eta_i\tau_{i-1}}{2L_{i-1}}\|\nabla f(x_{i-1}) - \nabla f(x_{i-2})\|^2 - \frac{1}{2}\|z_i-y_{i-1}\|^2\\
  &\leqby{(b)}\frac{\eta_i L_{i-1}}{2\tau_{i-1}}\|z_{i-1}-z_i\|^2 - \frac{1}{2}\|z_i-y_{i-1}\|^2\\
  &\leqby{(c)} \frac{\eta_i L_{i-1}(1 - \beta)^2}{\tau_{i-1}}\|z_{i-1}-y_{i-2}\|^2 - \left(\frac{1}{2} - \frac{\eta_i L_{i-1}}{\tau_{i-1}}\right)\|z_i-y_{i-1}\|^2\\
  &\leqby{(d)} \frac{1}{4}\|z_{i-1}-y_{i-2}\|^2 - \frac{1}{4}\|z_i-y_{i-1}\|^2,
\end{align*}
where (a) follows from the Cauchy--Schwarz inequality, (b) follows from Young's inequality, (c) follows from $\|z_{i-1} - z_i\|^2 \leq 2(1 - \beta)^2\|z_{i-1}-y_{i-2}\|^2 + 2\|z_i - y_{i-1}\|^2$, and (d) follows from $\eta_i \leq \frac{\tau_{i-1}}{4L_{i-1}}$ due to~\eqref{ineq:eta-k3}. Moreover, it holds that
\begin{align*}
  \D_2 &\leqby{(e)} \eta^2 L_1\|x_1-x_0\|\,\|z_1-z_2\|-\frac12\|z_2-y_1\|^2 \\
  &\leqby{(f)} \eta^2 L_1\|z_1-y_0\|(\|z_1-y_0\|+\|z_2-y_0\|)-\frac12\|z_2-y_0\|^2\\
  &\leqby{(g)} \frac{5\eta^2 L_1}{4}\|z_1-z_0\|^2+\left(\eta^2 L_1-\frac12\right)\|z_2-y_0\|^2\\
  &\leqby{(h)} \frac{5\eta^2 L_1}{4}\|z_1-z_0\|^2-\frac14\|z_2-y_1\|^2,
\end{align*}
where (e) follows from the Cauchy--Schwarz inequality and~\eqref{def:adaptive-step-size-1}, (f) follows from the triangle inequality, $x_1 = z_1$, and $x_0 = y_0 = z_0$ due to $\tau_1 = \beta_1 = 0$, (g) follows from Young's inequality, and (h) follows from $\eta_2 \leq \frac{1}{4L_1}$ due to~\eqref{ineq:eta-k2}.
Substituting these inequalities into~\eqref{ineq:obj-bound} yields
\begin{align*}
  &\sum_{i=1}^k \eta_{i+1}\left(\tau_i f(x_i) + \langle \nabla f(x_i), x_i-z\rangle - \tau_i f(x_{i-1}) - (\delta_{i+1}+\delta_i)\right)\\
  &\quad \leq \frac{1}{2\beta}\|y_1-z\|^2 - \frac{1}{2\beta}\|y_{k+1}-z\|^2
  + \left(\frac{5\eta_2L_1}{4}-\frac{\eta_2}{2\eta_1}\right)\|z_1-z_0\|^2 - \frac{1}{4}\|z_{k+1}-y_k\|^2.
\end{align*}
Taking $z=x^*$, using convexity of $f$ (so that $\langle \nabla f(x_i), x_i-x^*\rangle \geq f(x_i)-f(x^*)$), and using the update $x_k$, we obtain
\begin{align}
  &\sum_{i=1}^{k-1}\left((\tau_i+1)\eta_{i+1}-\tau_{i+1}\eta_{i+2}\right)(f(x_i)-f(x^*))
  + (\tau_k+1)\eta_{k+1}(f(x_k)-f(x^*))\nonumber\\
  &\quad \leq \tau_1\eta_2(f(x_0)-f(x^*)) + \frac{1}{2\beta}\|y_1-x^*\|^2
  + \left(\frac{5\eta_2L_1}{4}-\frac{\eta_2}{2\eta_1}\right)\|z_1-z_0\|^2 \nonumber\\
  &\quad\ \ - \frac{1}{2\beta}\|y_{k+1}-x^*\|^2 - \frac{1}{4}\|z_{k+1}-y_k\|^2
  + \sum_{i=1}^k \eta_{i+1}(\delta_{i+1}+\delta_i)\nonumber\\
  &\quad \leqby{(i)} \frac{1}{2\beta}\|z_0-x^*\|^2
  + \left(\frac{5\eta_2L_1}{4}-\frac{\eta_2}{2\eta_1}\right)\|z_1-z_0\|^2
  - \frac{1}{2\beta}\|y_{k+1}-x^*\|^2 - \frac{1}{4}\|z_{k+1}-y_k\|^2\nonumber\\
  &\quad + \sum_{i=1}^{k}\eta_{i+1}(\delta_{i+1}+\delta_i), \label{ineq:obj-bound-final}
\end{align}
where (i) follows from $\tau_1=0$ and $y_1=z_0$.
By~\eqref{ineq:eta-k3}, we have $\eta_i \leq \frac{\tau_{i-2}+1}{\tau_{i-1}}\eta_{i-1}$, equivalently,
$(\tau_{i-2}+1)\eta_{i-1}-\eta_i\tau_{i-1}\geq 0$. Therefore, letting $\Scal_k:=\sum_{i=1}^k\eta_{i+1}(\delta_{i+1} + \delta_i)$, we have
\begin{align*}
  f(x_k)-f(x^*)
  \leq \frac{1}{(\tau_k+1)\eta_{k+1}}
  \left(
    \frac{1}{2\beta}\|z_0-x^*\|^2 + \left(\frac{5\eta_2L_1}{4}-\frac{\eta_2}{2\eta_1}\right)\|z_1-z_0\|^2 + \Scal_k)
  \right).
\end{align*}

Furthermore, define
\begin{align*}
  \overline{x}_k = \frac{\sum_{i=1}^{k-1}\bigl((\tau_i+1)\eta_{i+1}-\tau_{i+1}\eta_{i+2}\bigr)x_i + (\tau_k+1)\eta_{k+1}x_k}{\sum_{i=2}^{k+1}\eta_i}.
\end{align*}
Since $\sum_{i=1}^{k-1}\bigl((\tau_i+1)\eta_{i+1}-\tau_{i+1}\eta_{i+2}\bigr) + (\tau_k+1)\eta_{k+1} = \sum_{i=2}^{k+1}\eta_i$, we obtain
\begin{align*}
  f(\overline{x}_k)-f(x^*)
  \leq \frac{1}{\sum_{i=2}^{k+1}\eta_i}
  \left(
    \frac{1}{2\beta}\|z_0-x^*\|^2
    + \left(\frac{5\eta_2L_1}{4}-\frac{\eta_2}{2\eta_1}\right)\|z_1-z_0\|^2
    + \Scal_k
  \right).
\end{align*}
We show the boundedness of ${x_k}$, ${y_k}$, and ${z_k}$ next.
From~\eqref{eq:zk-z}, we have
\begin{align*}
  &\frac{1}{2\beta}\|y_{k+1}-x^*\|^2+\frac{1}{4}\|z_{k+1}-y_k\|^2\\
  &\geq \min\left\{\frac{1}{4(1-\beta)},\frac{1}{2}\right\}\cdot
  \left(\frac{1}{\beta}\|y_{k+1}-x^*\|^2 + (1-\beta)\|z_{k+1}-y_k\|^2\right) \nonumber\\
  &= \min\left\{\frac{1}{4(1-\beta)},\frac{1}{2}\right\}\cdot
  \left(\|z_{k+1}-x^*\|^2 + \frac{1-\beta}{\beta}\|y_k-x^*\|^2\right) \nonumber.
\end{align*}
Together with~\eqref{ineq:obj-bound-final}, we obtain
\begin{align*}
  \|z_{k+1}-x^*\|^2
  \leq \left(\min\left\{\frac{1}{4(1-\beta)},\frac{1}{2}\right\}\right)^{-1}
  \left(\frac{1}{2\beta}\|z_0-x^*\|^2
  +\left(\frac{5\eta_2L_1}{4}-\frac{\eta_2}{2\eta_1}\right)\|z_1-z_0\|^2\right).
\end{align*}
This implies that $\{z_k\}$ is bounded. Since ${x_k}$ and ${y_k}$ are its weighted average sequences, they are also bounded.
\end{proof}

\subsection{Proof of~\cref{corollary:convex-sublinear}}\label{app:proof-corollary-convex-sublinear}
\begin{proof}[Proof of~\cref{corollary:convex-sublinear}]
  The stepsize $\eta_k$ satisfies~\eqref{ineq:eta-k2} and~\eqref{ineq:eta-k3} from the definition of $\tau_k$ and $\beta$.
  By induction, we show that $\eta_k \geq \frac{k}{12 \hat{L}_{k-1}}$ for all $k \geq 2$ (its proof is given below).
  By substituting this bound into~\cref{theorem:convex-obj-decrease}, we have
  \begin{align}
    f(x_k) - f(x^*) &\leq \frac{12\hat{L}_k}{k(k + 1)}(2\E + \Scal_k) \label{ineq:convex-sublinear-bound}
  \end{align}
  and
  \begin{align}
    f(\overline{x}_k) - f(x^*) &\leq \frac{1}{\sum_{i=2}^{k+1}\frac{i}{6\hat{L}_{i-1}}}(2\E + \Scal_k). \label{ineq:convex-sublinear-bound-avg}
  \end{align}
  We show $\eta_k \leq \frac{k}{2}\eta_2$ for $k \geq 2$ by induction. From the definition of $\eta_k$, the base cases $k=2$ and $k=3$ hold trivially.
  Suppose that the claim holds for some $k \geq 3$.
  Then, from the definition of $\eta_{k+1}$, we have
  \begin{align*}
    \eta_{k+1} \leq \frac{k+1}{k}\eta_k \leq \frac{k+1}{k}\cdot\frac{k}{2}\eta_2 = \frac{k+1}{2}\eta_2.
  \end{align*}
  Thus, we have $\eta_k \leq \frac{k}{2}\eta_2$ for all $k \geq 2$.
  Therefore, we can bound $\Scal_k=\sum_{i=1}^k\eta_{i+1}(\delta_{i+1} + \delta_i)$ as
  \begin{align*}
    \Scal_k
    &\leqby{(a)} \sum_{i=1}^k \frac{i+1}{2}\eta_2\left(\frac{D_0^2}{(i+1)^{1+\theta}(i+2)} + \frac{D_0^2}{i^{1+\theta}(i+1)}\right) \leq \frac{\eta_2 D_0^2}{2}\sum_{i=1}^k\left(\frac{1}{(i+1)^{1+\theta}} + \frac{1}{i^{1+\theta}}\right)\\
    &\leqby{(b)} \frac{\eta_2 D_0^2}{2}\left(1 + \int_1^{k+1} \frac{2}{x^{1+\theta}}dx\right)= \frac{\eta_2 D_0^2}{2}\left(1 + \frac{2}{\theta}(1 - (k+1)^{-\theta})\right) \leq \frac{\eta_2 D_0^2(2+\theta)}{2\theta},
  \end{align*}
  where (a) follows from $\eta_{i+1} \leq \frac{i+1}{2}\eta_2$ and (b) follows from the integral test.
  By substituting this bound into~\eqref{ineq:convex-sublinear-bound} and~\eqref{ineq:convex-sublinear-bound-avg} and using $\eta_2 \leq (1-\beta)\eta_1$, we obtain the desired result.
\end{proof}
The proof of $\eta_k \geq \frac{k}{12\hat{L}_{k-1}}$ follows the same argument as that of~\citep[Corollary 1]{Li2025-cb}. We include it here for completeness and to make the presentation self-contained.
\begin{proof}[Proof of $\eta_k \geq \frac{k}{12\hat{L}_{k-1}}$]
  We prove~$\eta_k \geq \frac{k}{12\hat{L}_{k-1}}$ by induction on $k$.
  Recall the definition of $\hat{L}_k$ as follows:
  \begin{align*}
    \hat{L}_k := \max\left\{\frac{1}{4(1-\beta)\eta_1},\ \max_{1\leq i\leq k} L_i\right\}.
  \end{align*}
  For $k=2$, by the definition of $\eta_2$ we have
  \begin{align*}
    \eta_2=\min\left\{(1-\beta)\eta_1,\ \frac{1}{4L_1}\right\}=\frac{1}{4\hat{L}_1}.
  \end{align*}
  For $k=3$,
  \begin{align*}
    \eta_3=\min\left\{\eta_2,\ \frac{1}{4L_2}\right\}
    =\min\left\{\frac{1}{4\hat{L}_1},\ \frac{1}{4L_2}\right\} = \frac{1}{4\hat{L}_2}
    \geq\frac{3}{12\hat{L}_2}.
  \end{align*}
  Now assume that for some $k\geq3$, $\eta_k \geq \frac{k}{12\hat{L}_{k-1}}$.
  Then, using the stepsize rule,
  \begin{align*}
    \eta_{k+1}=\min\left\{\frac{k+1}{k}\eta_k,\ \frac{k}{8L_k}\right\}
    \geq \min\left\{\frac{k+1}{12\hat{L}_{k-1}},\ \frac{k}{8L_k}\right\}
    \geq \min\left\{\frac{k+1}{12},\ \frac{k}{8}\right\}\cdot \frac{1}{\hat{L}_k}.
  \end{align*}
  Since $k\geq3$ implies $\frac{k}{8}\geq \frac{k+1}{12}$, the minimum equals $\frac{k+1}{12}$, and hence
  \begin{align*}
    \eta_{k+1}\geq \frac{k+1}{12\hat{L}_k},
  \end{align*}
  which completes the induction.
\end{proof}

\subsection{Proof of~\cref{corollary:convex-fixed-iteration}}\label{app:proof-corollary-convex-fixed-iteration}
\begin{proof}[Proof of~\cref{corollary:convex-fixed-iteration}]
  Using $\eta_k \leq \frac{k}{2}\eta_2$ and $\delta_k = \frac{D_0^2}{Nk}$, we now bound $\Scal_k=\sum_{i=1}^k\eta_{i+1}(\delta_{i+1} + \delta_i)$ as follows: 
  \begin{align*}
    \Scal_N 
    &\leq 
    \frac{\eta_2 D_0^2}{2N}\sum_{i=1}^N\left(1 + \frac{i+1}{i}\right)
    \leqby{(a)}\eta_2 D_0^2\left(1 + \frac{1 + \log N}{2N}\right) \leq \frac{3}{2}\eta_2 D_0^2,
  \end{align*}
  where (a) follows from the integral test.
  Therefore, by substituting this bound into~\eqref{ineq:convex-sublinear-bound} with $k=N$ and using $\eta_2 \leq (1-\beta)\eta_1$, we obtain the desired result.
\end{proof}

\subsection{Proof of~\cref{corollary:convex-sublinear-2}}\label{app:proof-corollary-convex-sublinear-2}
\begin{proof}[Proof of~\cref{corollary:convex-sublinear-2}]
  Using $2(1 - \beta)^2 \geq 4/3$ due to $\beta\in(0,1 - \sqrt{6}/3]$ and the setting of $\eta_k$, we find that the stepsize $\eta_k$ satisfies~\eqref{ineq:eta-k3}.
  By the definition of $\tau_k$, we have for any $k \geq 3$,
  \begin{align}
    \tau_k &= \tau_2 + \sum_{i = 3}^{k}\left(\frac{\alpha}{2} + \frac{2(1-\alpha)\eta_i L_{i-1}}{\tau_{i-1}}\right) \leqby{(a)} 1 + \sum_{i=3}^k\frac{1}{2} = \frac{k}{2}, \label{ineq:tau}
  \end{align}
  where (a) follows from $\tau_2 = 1$ and $\eta_i \leq \frac{\tau_{i-1}}{4L_{i-1}}$. Therefore, for any $k \geq 2$, we also obtain
  \begin{align}
    \frac{\tau_{k-1}+1}{\tau_k} \geqby{(b)} \frac{\tau_k + 1/2}{\tau_k} \geqby{(c)} \frac{k+1}{k}, \label{ineq:tau-ratio}
  \end{align}
  where (b) follows from $\tau_k\leq \tau_{k-1} + 1/2$ due to the definitions of $\eta_k$ and $\tau_k$ and (c) follows from~\eqref{ineq:tau}.
  These inequalities imply that $\tau_k \geq 1 + \frac{\alpha(k-2)}{2}$.
  By induction, we can show $\eta_k \geq \frac{3 + \alpha(k-3)}{12 \hat{L}_{k-1}}$ for all $k \geq 2$ (its proof is given below).
  We bound $\Scal_k$ as follows:
  \begin{align*}
    \Scal_k
    &\leqby{(d)} \sum_{i=1}^k\frac{i}{8L_i}(\delta_{i+1} + \delta_i)= \sum_{i=1}^k \frac{i}{8L_i}\left(\frac{D_0^2}{(i+1)^{1+\theta}(i+2)} + \frac{D_0^2}{i^{1+\theta}(i+1)}\right)\\
    &\leq \frac{D_0^2}{8\underline{L}_k}\sum_{i=1}^k\left(\frac{1}{(i+1)^{1+\theta}} + \frac{1}{i^{1+\theta}}\right) \leq \frac{D_0^2}{8\underline{L}_k}\left(1 + \frac{2}{\theta}\right),
  \end{align*}
  where (d) follows from $\eta_{i+1} \leq \frac{\tau_i}{4L_i}$ and~\eqref{ineq:tau}.
  Substituting this into~\cref{theorem:convex-obj-decrease}, we obtain the desired results.
\end{proof}
The proof of $\eta_k \geq \frac{3 + \alpha(k-3)}{12\hat{L}_{k-1}}$ follows the same argument as that of~\citep[Corollary 2]{Li2025-cb}. We include it here for completeness and to make the presentation self-contained.
\begin{proof}[Proof of $\eta_k \geq \frac{3 + \alpha(k-3)}{12\hat{L}_{k-1}}$]
  We prove the claim by induction on $k$.
  For $k=2$, by the definition of $\eta_2$, we have
  \begin{align*}
    \eta_2 = \min\left\{(1-\beta)\eta_1,\ \frac{1}{4L_1}\right\} = \frac{1}{4\hat{L}_1}.
  \end{align*}
  For $k=3$, we have
  \begin{align*}
    \eta_3 = \min\left\{\eta_2,\ \frac{1}{4L_2}\right\} = \min\left\{\frac{1}{4\hat{L}_1},\ \frac{1}{4L_2}\right\} = \frac{1}{4\hat{L}_2} \geq \frac{3}{12\hat{L}_2}.
  \end{align*}
  Now assume that for some $k \geq 3$, $\eta_k \geq \frac{3 + \alpha(k-3)}{12\hat{L}_{k-1}}$.
  Then, using the stepsize rule, we have
  \begin{align*}
    \eta_{k+1} &= \min\left\{\frac{4}{3}\eta_{k-1}, \frac{\tau_{k-2} + 1}{\tau_{k-1}}\eta_{k-1}, \frac{\tau_{k-1}}{4L_{k-1}}\right\}\\
    &\geqby{(a)}\min\left\{\frac{k+1}{k}\eta_k,\ \frac{1+\alpha(k-2)/2}{4L_k}\right\}\\
    &\geq \min\left\{\frac{k+1}{k}\cdot \frac{3+\alpha(k-3)}{12\hat{L}_{k-1}}, \frac{2+\alpha(k-2)}{8L_k}\right\}
    \geq
    \frac{3+\alpha(k-2)}{12\hat{L}_k},
  \end{align*}
  where (a) follows from~\eqref{ineq:tau-ratio}.
  Since $k \geq 3$ implies $\frac{k}{8} \geq \frac{3 + \alpha(k-2)}{12}$, the minimum equals $\frac{3 + \alpha(k-2)}{12}$, and hence
  \begin{align*}
    \eta_{k+1} \geq \frac{3 + \alpha(k-2)}{12\hat{L}_k},
  \end{align*}
  which completes the induction.
\end{proof}

\subsection{Proof of~\cref{theorem:strongly-convex-linear}}\label{app:proof-strongly-convex-linear}
\begin{proof}[Proof of~\cref{theorem:strongly-convex-linear}]
  We prove the claim by induction. The base case $s=0$ holds trivially.
  Suppose that the claim holds for some $s \geq 0$, \ie, $f(w_s) - f(x^*) \leq \frac{\phi_0}{2^s}$.
  The sequence$\{w_s\}$ is bounded because $\{x_k\}$ is bounded from~\cref{theorem:convex-obj-decrease}, $w_s = x_N$ for some $N$, and $P$ is compact.
  Using the local strong convexity of $f$, we have
  \begin{align*}
    \|w_s - x^*\|^2 \leq \frac{2}{\mu}(f(w_s) - f(x^*)) \leq \frac{2\phi_0}{2^s\mu_s},
  \end{align*}
  where $\mu$ is the local strong convexity parameter of $f$.
  By applying~\cref{corollary:convex-fixed-iteration} with $x_0 = y_0 = z_0 = w_s$, $y_N = w_{s+1}$, and $D_0^2 = \|z_0 - x^*\|^2 = \|x_0 - x^*\|^2 = \frac{2\phi_0}{2^s\mu}$ and using the line search at the first iteration, we have
  \begin{align*}
    f(w_{s+1}) - f(x^*) &\leq\frac{15\gamma LD_0^2}{4N(N+1)}\left(\frac{2}{\beta(1-\beta)} + 3\eta_1\right)\\ &= \frac{15\gamma L}{4N(N+1)}\left(\frac{2}{\beta(1-\beta)} + 3\eta_1\right)\cdot\frac{2\phi_0}{2^s\mu}
    \leq \frac{\phi_0}{2^{s+1}},
  \end{align*}
  Thus, by induction, we have $f(w_s) - f(x^*) \leq \frac{\phi_0}{2^s}$ for all $s \geq 0$.

  Next, let $S:= \lceil\log_2\max\{\phi_0/\epsilon, 1\}\rceil$.
  The number of gradient computations required to achieve $f(w_s) - f(x^*) \leq \epsilon$ is at most $NS$, which is bounded by $O(N_0\log_2\lceil\max\{\phi_0/\epsilon,1\}\rceil)$.
  Finally, we analyze the total number of linear minimizations.
  Let $T_{i,k}$ be the number of CG iterations performed to solve~\eqref{eq:acfg-subprob} up to the accuracy $\delta_k$ at the $k$-th outer iteration of the $i$-th stage.
  From $T_k$ in~\cref{theorem:cg-complexity}, it holds that
  \begin{align*}
    T_{i,k} \leq \frac{6D^2}{\eta_k \delta_k} + 1 \leqby{(a)} \frac{36\mu L D^2 2^s N}{\phi_0} + 1,
  \end{align*}
  where (a) follows from the definition of $\delta_k$ in~\cref{alg:adcgs-strongly-convex}, $\eta_k \geq \frac{k}{12\hat{L}_{k-1}}$ due to~\cref{corollary:convex-sublinear}, and $\hat{L}_k \leq L$.
  Therefore, the total number of CG iterations performed in the $s$-th stage is at most
  \begin{align*}
    \sum_{s=1}^S\sum_{k=1}^N T_{s,k} &\leq \sum_{s=1}^S\sum_{k=1}^N \frac{36\mu L D^2 2^s N}{\phi_0} + NS\\
    &= \frac{36\mu L D^2 N^2}{\phi_0}\sum_{s=1}^S 2^s + NS\\
    &\leq \frac{36\mu L D^2 N^2}{\phi_0}2^{S+1} + NS\\
    &\leqby{(b)} \frac{144\mu L D^2 N^2}{\epsilon} + NS
  \end{align*}
  where (b) follows from the definition of $S$.
  This inequality is bounded by~$O(\mu L D^2/\epsilon + N_0\log_2\lceil\max\{\phi_0/\epsilon,1\}\rceil)$.
\end{proof}

\section{Additional Experimental Results}\label{app:additional-experimental-results}
\subsection{Least Squares Regression over the Simplex}\label{app:quadratic_simplex}
We provide additional experimental results for least squares regression over the simplex, including results for the first-iteration line-search strategy in~\cref{alg:ls-first-iteration}.
In \cref{fig:least-squares-simplex-base-200-1000}, the proposed method achieves the best performance.
In particular, \adcgs:0.5 outperforms both the baseline methods and the other \adcgs variants.
In \cref{fig:least-squares-simplex-adcgs-200-1000}, the first-iteration line-search variants remain stable, whereas \adcgs with $\alpha = 1.0$ is the exception.

\subsection{$\ell_p$ Loss Regression over the $\ell_2$-Ball}\label{app:lp_loss}
We provide additional experimental results for $\ell_p$ loss regression over the $\ell_2$-ball.
In \cref{fig:cpusmall-lp-loss}, ACG attains good objective values but exhibits instability, whereas the proposed methods and AC-FGM perform consistently well.

\subsection{Logistic Regression}\label{app:logistic_regression}
We provide additional experimental results for logistic regression.
Overall, the results are favorable: \adcgs:0.1 performs best on \cref{fig:logistic-regression-a9a,fig:logistic-regression-phishing,fig:logistic-regression-w8a}.
\begin{figure*}[!t]
  \centering
  \includegraphics[width=0.96\textwidth]{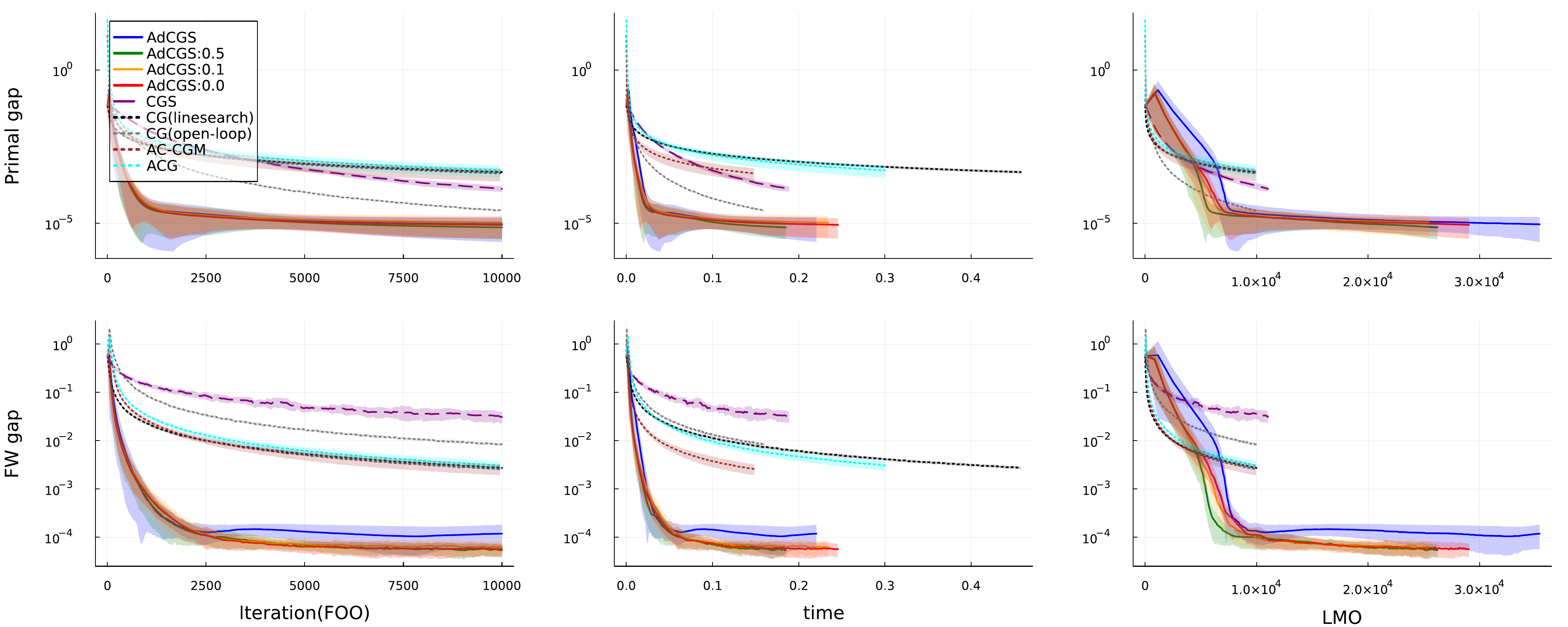}
  \caption{Least squares regression over the simplex with $(m,n) = (1000,200)$.}\label{fig:least-squares-simplex-base-200-1000}
\end{figure*}
\begin{figure*}[!t]
  \centering
  \includegraphics[width=0.96\textwidth]{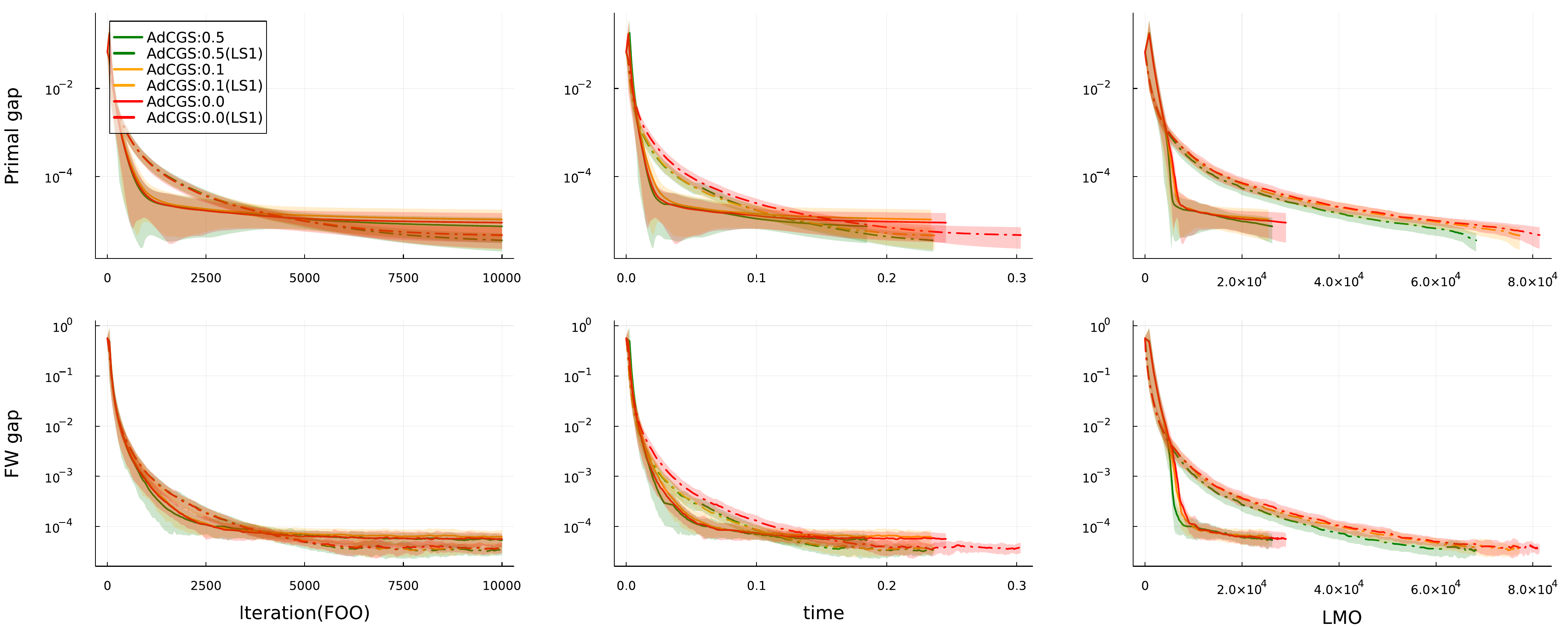}
  \caption{Least squares regression over the simplex with $(m,n) = (1000,200)$.}\label{fig:least-squares-simplex-adcgs-200-1000}
\end{figure*}
\begin{figure*}[!t]
  \centering
  \includegraphics[width=0.96\textwidth]{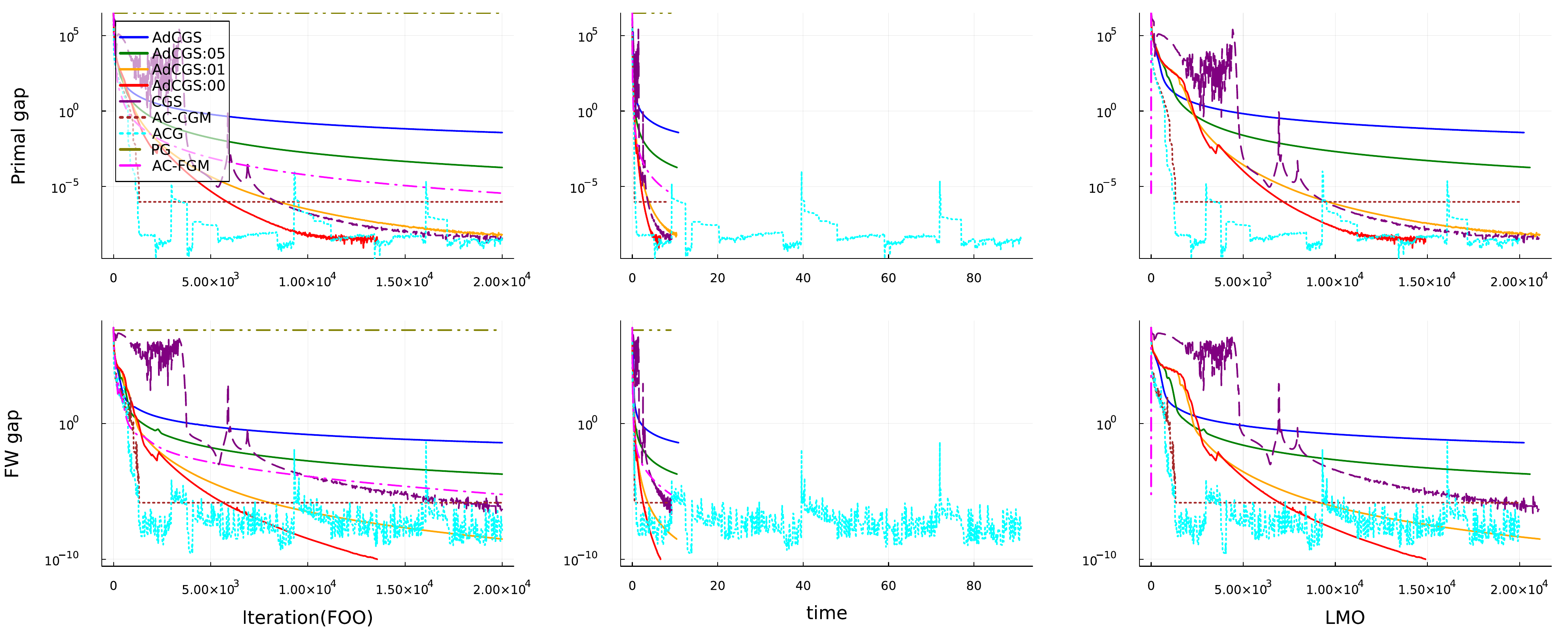}
  \caption{$\ell_p$ loss regression over the cpusmall dataset with $(m,n) = (8192,12)$.}\label{fig:cpusmall-lp-loss}
\end{figure*}
\begin{figure*}[!t]
  \centering
  \includegraphics[width=0.96\textwidth]{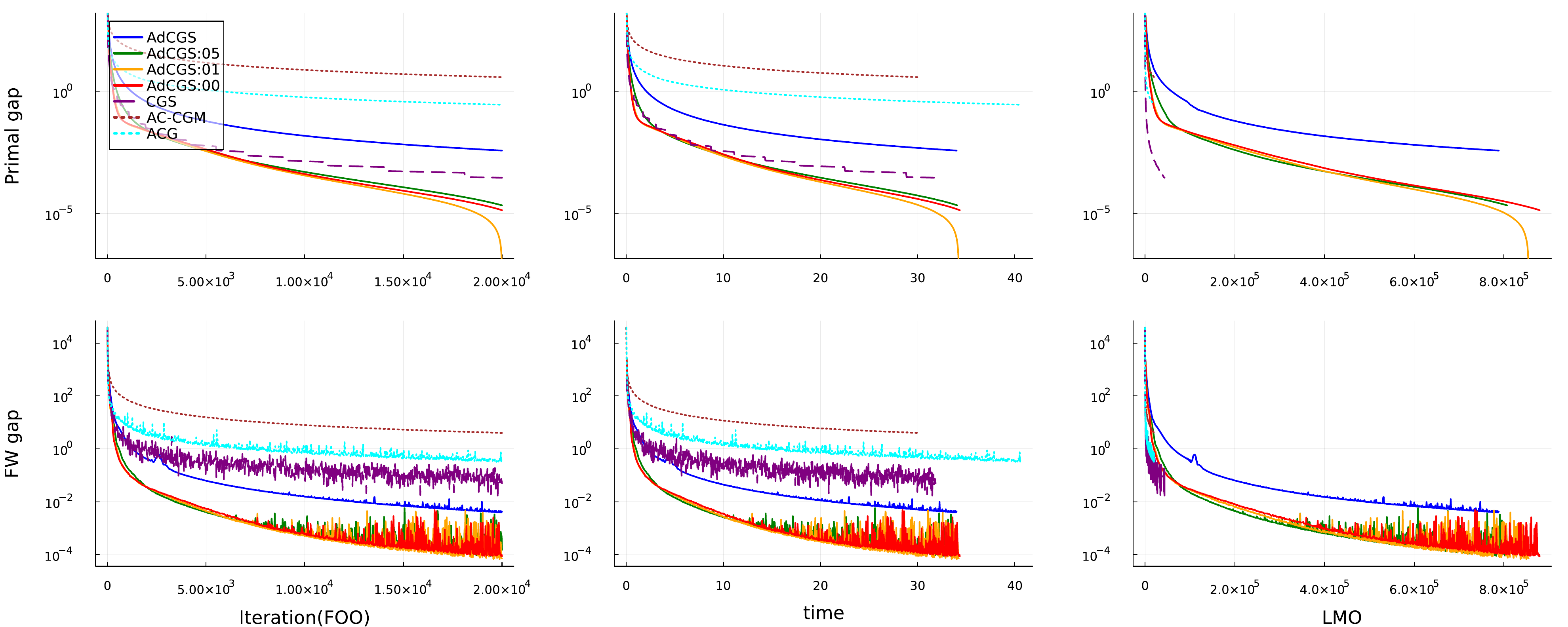}
  \caption{Logistic regression over the a9a dataset with $(m,n) = (32561,123)$.}\label{fig:logistic-regression-a9a}
\end{figure*}
\begin{figure*}[!t]
  \centering
  \includegraphics[width=0.96\textwidth]{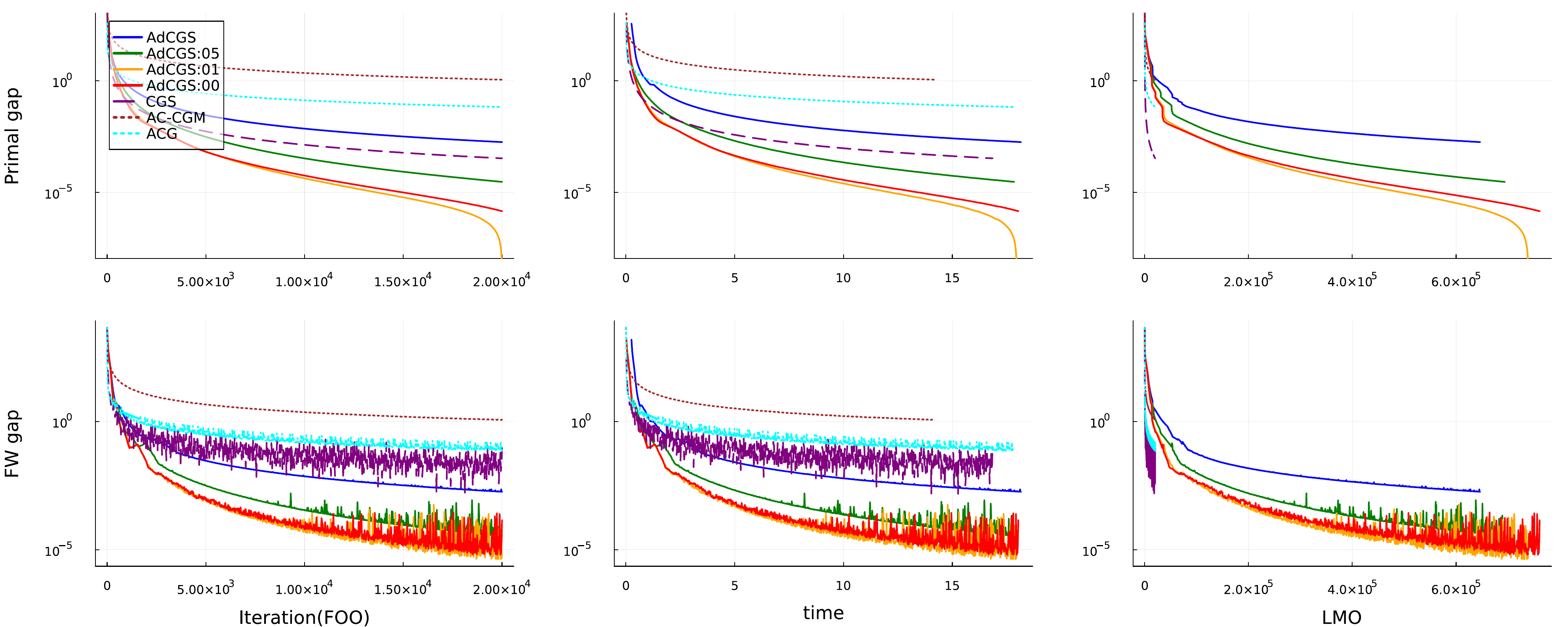}
  \caption{Logistic regression over the phishing dataset with $(m,n) = (11055,68)$.}\label{fig:logistic-regression-phishing}
\end{figure*}
\begin{figure*}[!t]
  \centering
  \includegraphics[width=0.96\textwidth]{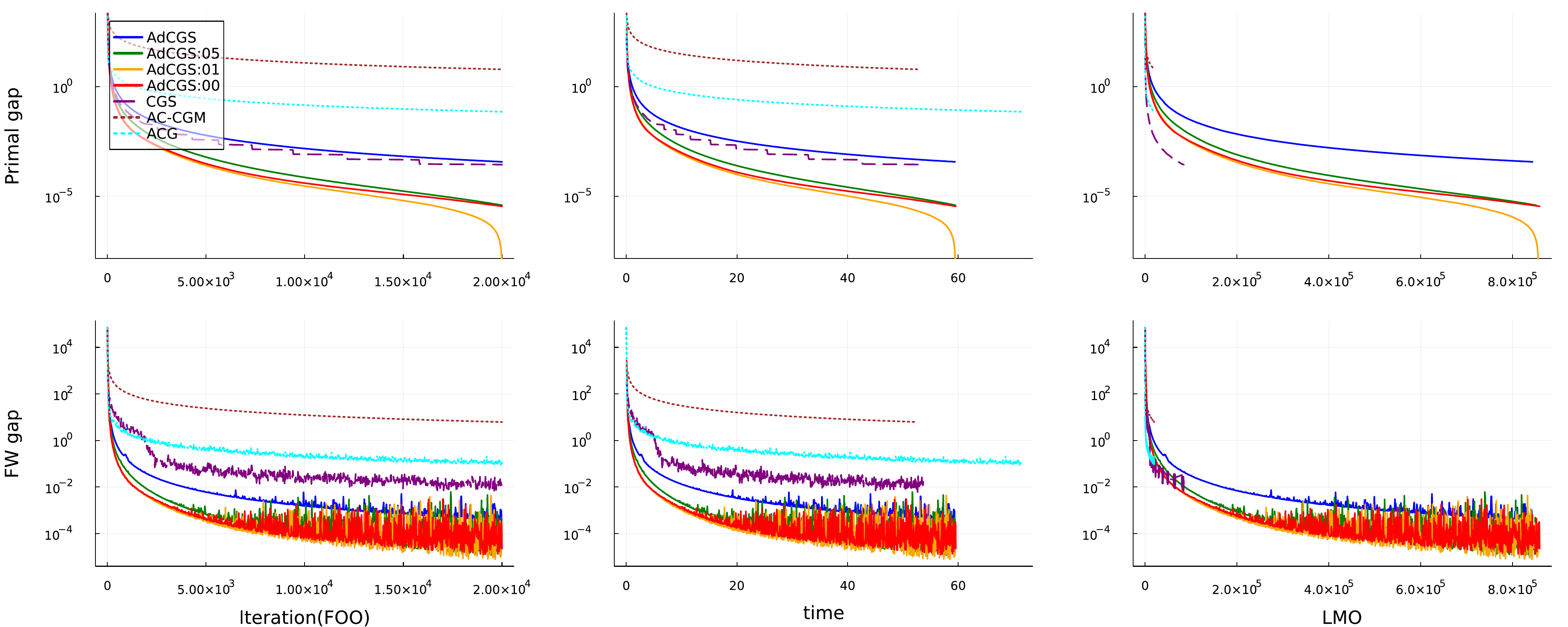}
  \caption{Logistic regression over the w8a dataset with $(m,n) = (49749,300)$.}\label{fig:logistic-regression-w8a}
\end{figure*}

\end{document}